\newtheorem{theorem}{Theorem}
\newtheorem{conclusion}[theorem]{Conclusion}
\newtheorem{proposition}[theorem]{Proposition}
\newtheorem{example}[theorem]{Example}
\begin{document}

\author{Monika Maj\footnote{Kazimierz Pu\l aski University of Technology and Humanities
in Radom}  %
\and Zbigniew Pasternak-Winiarski\footnote{Faculty of Mathematics
and Information Science,  Warsaw University of Technology, Ul.
Koszykowa 75, 00-662 Warsaw, POLAND, Institute of Mathematics,
University of Bia\l ystok Akademicka 2, 15-267 Bia\l ystok,
Poland, e-mail: Z.Pasternak-Winiarski@mini.pw.edu.pl }}
\title{Zeros of densities and decomposition problem for multidimensional entire characteristic functions
of order 2}

\maketitle

\begin{abstract}
We consider the entire characteristic functions of order 2 and we
prove some decomposition theorems in a multidimensional case. We
show that the lack of zeros of the density function is a necessary
but not a sufficient (as in the one-dimensional case) condition
for a characteristic function to be decomposable. We also find
some simple sufficient conditions.
\bigskip

\noindent {\bf Key words and phrases:} characteristic function,
polynomial-normal distribution, decomposition theorem.

\noindent {\bf 2000 AMS Subject Classification Code} 60E10.
\bigskip
\end{abstract}

\section{Preliminaries}

Entire characteristic functions of order 2 with finite number of
zeros were considered by Lukacs. In \cite{lukacz} and
\cite{lukacz3} he presents theorems related to characteristic
functions of the form
\begin{equation}
\varphi (t)=P(t)\exp (A(t)), \qquad t\in\mathbb{R}
\label{fch}
\end{equation}%
where $P$ and $A$ are polynomials and $A$ is of order 2. $P$ is a polynomial of an even degree and has a form
\begin{equation*}
P(t)=\prod_{j=1}^{d}(1-\dfrac{t}{\xi_{j}})(1+\dfrac{t}{\overline{\xi_{j}}}),
\end{equation*}
where $\xi_{j}$ and $\overline{\xi_{j}}$ are zeros of $P$. Lukacs
has solved decomposition problem for characteristic function of
the form (1) where

\begin{equation}  \label{I2}
\varphi(t)=P(t)exp{ \left[ -\dfrac{\sigma^{2}t^{2}}{2} \right]}, \qquad t\in\mathbb{R},
\end{equation}

Then the density function corresponding to the characteristic
function (\ref{I2}) has the form

\begin{equation*}
f(x)=\frac{1}{\sigma \sqrt{2\pi }}Q(x)\exp \left[ -\frac{x^{2}}{2\sigma ^{2}}%
\right] , \qquad x\in\mathbb{R}, \label{zero1}
\end{equation*}

where the polynomial $Q$ can be written as

\[
Q(x)=\sum\limits_{k=0}^{2n}\left( -1\right) ^{k}\lambda _{k}\sigma
^{-k}H_{k}\left( \frac{x}{\sigma }\right) .
\]%

Here every $H_{k}$ is the Hermite polynomial of order $k$ and
$\lambda_k\in\mathbb{R}$ for $k=1,2,...,2n.$ It is also clear that
the polynomial $Q$ must be non-negative for all $x\in\mathbb{R}$.
In this paper we will call $Q$ the polynomial {\it associated}
with the characteristic function $\varphi$. We have two
possibilities for characteristic function of the form (\ref{I2}):
either $\varphi$ is indecomposable or it admits a decomposition

\begin{equation}
\varphi (t)=\varphi _{1}(t)\varphi _{2}(t), \qquad x\in\mathbb{R},
 \label{fch3}
\end{equation}%
where $\varphi _{1}$ and $\varphi _{2}$ are non-trivial
characteristic functions. There are again two possibilities in the
case where $\varphi$ of the form (\ref{fch3}) is decomposable.
From Pluci\'{n}ska (and Lukacs) theorem (see \cite{pluc4}) we have

(a) $\varphi $ has a normal factor \mbox{$\varphi _{1}(t)=\exp \left[ -%
\frac{\sigma _{1}^{2}t^{2}}{2}\right] $} and then\linebreak[4] $\varphi _{2}(t)=P(t)\exp %
\left[ -\frac{\sigma _{2}^{2}t^{2}}{2}\right] ,$ where $\sigma
_{1}^{2}+\sigma _{2}^{2}=\sigma ^{2}$ or

(b) $\varphi $ has factors of the form (\ref{I2}) i.e.%
\[
\varphi _{j}(t)=P_{j}(t)\exp \left[ -\frac{\sigma _{j}^{2}t^{2}}{2}\right] ,%
\text{ \ }j=1,2,
\]%
where $\sigma _{1}^{2}+\sigma _{2}^{2}=\sigma ^{2}$,
$P(t)=P_{1}(t)P_{2}(t),\ degP_1>0$ and $degP_2>0$.

The following theorems can be find in  \cite{lukacz3}.

\begin{theorem}
Suppose that the characteristic function $\varphi$ of the form
(\ref{I2}) admits a non-trivial decomposition.
 Then its associated polynomial $Q$ has no real zeros (see \cite{lukacz3}
 th.7.3.1).
\end{theorem}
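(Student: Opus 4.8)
The plan is to convert the multiplicative decomposition of $\varphi$ into a convolution decomposition of the associated density $f$, and then to observe that smoothing by a non-negative density that is positive almost everywhere cannot leave any zeros behind. The starting point is the Pluci\'{n}ska--Lukacs dichotomy quoted above: a non-trivial factorization $\varphi = \varphi_1\varphi_2$ must be of type (a) or type (b). In both cases each factor $\varphi_j$ is the characteristic function of a random variable whose density $f_j$ is again of polynomial-normal type,
\[
f_j(x) = c_j\, Q_j(x)\exp\left[-\frac{x^2}{2\sigma_j^2}\right],
\]
with $c_j > 0$ and $Q_j$ a polynomial that is \emph{non-negative} on $\mathbb{R}$; in case (a) one of the two polynomials reduces to a positive constant, so that the corresponding $f_j$ is a strictly positive Gaussian. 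The non-negativity of each $Q_j$ is forced by the fact that $f_j$, being obtained from a characteristic function, is a genuine probability density.

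Next I would invoke the convolution theorem for characteristic functions together with uniqueness of the inverse Fourier transform: the identity $\varphi = \varphi_1\varphi_2$ is equivalent to $f = f_1 * f_2$, so that for every $x\in\mathbb{R}$,
\[
f(x) = \int_{-\infty}^{\infty} f_1(x-y)\,f_2(y)\,dy.
\]
Since both $f_1$ and $f_2$ are products of a non-negative polynomial and a strictly positive Gaussian, the integrand is non-negative for every $y$ and vanishes only when $x - y$ is a real zero of $Q_1$ or $y$ is a real zero of $Q_2$. As $Q_1$ and $Q_2$ are polynomials, each has only finitely many real zeros, so for each fixed $x$ the integrand is strictly positive for all but finitely many $y$.

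The conclusion is then immediate: a non-negative integrand that is continuous and strictly positive off a finite set has a strictly positive integral, whence $f(x) > 0$ for every $x\in\mathbb{R}$. Comparing this with
\[
f(x) = \frac{1}{\sigma\sqrt{2\pi}}\, Q(x)\exp\left[-\frac{x^2}{2\sigma^2}\right],
\]
in which the prefactor and the exponential are both positive, forces $Q(x) > 0$ for all real $x$; that is, $Q$ has no real zeros.

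I expect the main obstacle to lie not in the positivity-of-convolution step, which is routine, but in justifying the two structural inputs. First, one must verify that each factor $\varphi_j$ really yields a density of polynomial-normal type with a non-negative associated polynomial; this rests on the Pluci\'{n}ska--Lukacs classification and on the observation that multiplying a Gaussian characteristic function by a polynomial in $t$ corresponds, under the inverse Fourier transform, to applying a constant-coefficient differential operator to a Gaussian, which once more produces a polynomial times a Gaussian. Second, one must confirm that $f_1$ and $f_2$ are genuine $L^1$ densities, so that the convolution identity $f = f_1 * f_2$ holds pointwise; this is guaranteed by their Gaussian decay.
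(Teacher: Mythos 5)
Your argument is correct and is essentially the approach the paper itself uses: the paper states this one-dimensional result with only a citation to Lukacs, but its own proof of the multidimensional analogue (Theorem 3) is exactly your scheme --- pass from $\varphi=\varphi_1\varphi_2$ to $f=f_1*f_2$ via the Pluci\'{n}ska--Lukacs classification of the factors, and observe that a zero of $f$ would force the non-negative continuous integrand $f_1(x-y)f_2(y)$ to vanish for all $y$, which is impossible. The only difference is that in one dimension your observation that a nonzero polynomial has finitely many real zeros replaces the Baire-category argument the paper needs in $\mathbb{R}^d$, and you phrase the conclusion directly as $f(x)>0$ rather than by contradiction; both of your flagged ``structural inputs'' (non-negativity of each $Q_j$ and integrability of each $f_j$) are indeed the points the paper delegates to the cited classification theorem.
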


\begin{theorem}
Let $\varphi $ be the entire characteristic function of the form
(\ref{I2}) and suppose that the polynomial associated with
$\varphi $ has no real zeros. Then $\varphi $ has a normal factor
(see (a) above and \cite{lukacz3} th.7.3.2).
\end{theorem}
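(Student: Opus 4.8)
The plan is to deform the Gaussian part continuously and peel off a small normal factor, using the strict positivity that the hypothesis forces on $Q$. For $s\in(0,\sigma^2]$ consider the family
\[
\varphi_s(t) = P(t)\exp\!\left[-\tfrac{s\,t^2}{2}\right], \qquad t\in\mathbb{R},
\]
so that $\varphi=\varphi_{\sigma^2}$ and, for every such $s$,
\[
\varphi(t) = \varphi_s(t)\,\exp\!\left[-\tfrac{(\sigma^2-s)\,t^2}{2}\right].
\]
If I can show that $\varphi_s$ is itself a characteristic function for some $s<\sigma^2$, then the second factor above is a non-degenerate normal characteristic function, the numbers $\sigma_1^2:=\sigma^2-s$ and $\sigma_2^2:=s$ satisfy $\sigma_1^2+\sigma_2^2=\sigma^2$, and this is precisely the normal factor of case (a).

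First I would record that $\varphi_s$ has a candidate density of the same shape, namely
\[
f_s(x) = \frac{1}{\sqrt{2\pi s}}\,Q_s(x)\exp\!\left[-\frac{x^2}{2s}\right],
\]
where $Q_s$ is produced from the coefficients of $P$ by the same Hermite expansion that yielded $Q$, with $\sigma$ replaced by $\sqrt{s}$; in particular $Q_{\sigma^2}=Q$. Since $Q_s(x)$ is built from the fixed polynomial $P$ and the Hermite polynomials evaluated at $x/\sqrt{s}$, the map $(s,x)\mapsto Q_s(x)$ is jointly continuous (indeed smooth) on $(0,\infty)\times\mathbb{R}$, and $Q_s$ is a polynomial in $x$ of constant degree $2n$ whose leading coefficient is a positive multiple of that of $P$, hence bounded away from $0$ for $s$ near $\sigma^2$. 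The problem then reduces to showing $f_s\ge 0$, i.e. $Q_s\ge 0$ on $\mathbb{R}$, for some $s<\sigma^2$: once $Q_s\ge 0$, the function $f_s$ is a continuous, nonnegative, integrable function of total mass $\varphi_s(0)=P(0)=1$, so it is a probability density and $\varphi_s$ is its characteristic function.

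Next I would exploit the hypothesis. By assumption $Q=Q_{\sigma^2}$ has no real zeros, and since it defines a density it is nonnegative, so in fact $Q(x)>0$ for all $x\in\mathbb{R}$. To propagate this to nearby $s$ I would split $\mathbb{R}$ into a large box and its complement. Because the leading coefficient of $Q_s$ stays positive and bounded below near $s=\sigma^2$, there is a radius $R>0$ and a neighbourhood of $\sigma^2$ on which $Q_s(x)>0$ for all $|x|\ge R$, uniformly in $s$. On the compact interval $|x|\le R$ the continuous function $Q$ attains a positive minimum $m>0$, and by uniform continuity of $(s,x)\mapsto Q_s(x)$ on $[\sigma^2-\delta_0,\sigma^2]\times[-R,R]$ there is $\varepsilon\in(0,\delta_0)$ with $Q_s(x)>m/2>0$ there whenever $\sigma^2-\varepsilon<s\le\sigma^2$. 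Combining the two regions gives $Q_s>0$ on all of $\mathbb{R}$ for such $s$; choosing any $s\in(\sigma^2-\varepsilon,\sigma^2)$ yields a genuine density $f_s$, hence a characteristic function $\varphi_s$, and the displayed factorisation produces the normal factor.

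I expect the main obstacle to be the control at infinity: one must rule out that perturbing $s$ pushes a zero of $Q_s$ out to large $|x|$, where compactness is unavailable. This is handled by the observation that $Q_s$ has constant degree and positive leading coefficient for $s$ near $\sigma^2$, so its sign for large $|x|$ is locked and a single radius $R$ serves a whole neighbourhood of $\sigma^2$; the remaining estimate on $[-R,R]$ is then a routine uniform-continuity argument.
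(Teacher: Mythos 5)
Your argument is correct, and it is essentially the method this paper itself uses: the paper states Theorem 2 only with a citation to Lukacs, but its proof of the multidimensional Theorem 10 is exactly your deformation argument --- shrink the Gaussian variance, rewrite the candidate density via rescaled Hermite polynomials, observe that the coefficients of the associated polynomial converge as the deformation parameter tends to its endpoint, and invoke a perturbation statement (Theorem 7 there) to keep the polynomial positive, after which the leftover Gaussian is the normal factor. The only point worth flagging is that your control at infinity rests on the leading coefficient of $Q_s$ being positive and bounded below, which is automatic for a positive one-variable polynomial of even degree; this is precisely the step that fails to be automatic in $d>1$ and forces the paper to impose the extra hypothesis (\ref{3.37(RW)}) on the coefficients of $x_j^{2m}$ in its Theorem 10, so your proof is the clean one-dimensional specialization of that scheme.
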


From the above theorems we see that the possibility of
decomposition of a characteristic function (of the considered
type) is equivalent to the statement that the associated
polynomial has no real zeros. The aim of this paper is to prove
theorems related to the decomposition of multidimensional
characteristic functions. We will consider multidimensional
polynomial-normal distribution of the form

\[
f_{2l}(\mathbf{x})=\frac{\sqrt{\det \mathbf{A}}}{\left( 2\pi \right) ^{\frac{d}{2}}}%
p_{2l}(\mathbf{x})\exp \left( -\frac{1}{2}\left( \mathbf{x}-\mathbf{b}%
\right)^T \mathbf{A}\left( \mathbf{x}-\mathbf{b}\right)  \right) , \qquad x\in\mathbb{R}^{d},
\]%
where $p_{2l}$ is non-negative polynomial of order $2l$,
$\mathbf{b}\in\mathbb{R}^d$ and $\mathbf{A}$ is a nondegenerate
positive $d\times{d}$ matrix. As we know it from the theory of
Fourier transformation (see also Lukacs \cite{lukacz3}) the
characteristic function of a polynomial-normal distribution is a
product of some polynomial and the characteristic function of the
normal distribution defined by the same matrix $\mathbf{A}$ and
the same vector $\mathbf{b}$. We will prove the following theorem

\begin{theorem}
 Let the characteristic function  $\varphi $
of d-dimensional polynomial-normal distribution has a non-trivial decomposition
 $\varphi =\varphi _{1}\varphi _{2},$  where $\varphi_{1},\varphi _{2}$  are characteristic functions.
 Then its associated polynomial $Q$  has no real zeros.
\end{theorem}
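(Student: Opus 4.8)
The plan is to mimic the one-dimensional argument of Theorem 1, replacing real zeros of a single-variable polynomial by real zeros of the multidimensional associated polynomial $Q$ on $\mathbb{R}^d$, and to derive a contradiction from the existence of such a zero under the assumption of a non-trivial decomposition. First I would fix notation: write $\varphi(\mathbf{t}) = Q_\ast(\mathbf{t})\exp\bigl(-\tfrac{1}{2}\mathbf{t}^T\mathbf{A}^{-1}\mathbf{t} - i\mathbf{b}^T\mathbf{t}\bigr)$ for the characteristic function, where $Q_\ast$ is the polynomial factor, and let $Q$ denote the associated polynomial appearing in the corresponding density $f_{2l}$. The key structural fact, which I would record first, is that the Fourier inversion relating $\varphi$ and $f_{2l}$ forces the density to have the stated polynomial-normal form, so that the real zeros of $Q$ are exactly the points $\mathbf{x}\in\mathbb{R}^d$ where $f_{2l}(\mathbf{x})=0$.

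The core of the argument is a positivity/analyticity dichotomy. Suppose $\varphi = \varphi_1\varphi_2$ non-trivially. I would first argue that each factor $\varphi_j$ is itself an entire characteristic function of order $2$ of polynomial-normal type: since $\varphi$ is entire of order $2$ and has a Gaussian decay, a theorem on the factorization of entire characteristic functions (the multidimensional analogue of the results Lukacs uses, together with the Marcinkiewicz-type restriction that no factor can carry a higher-order exponential) guarantees that both $\varphi_1$ and $\varphi_2$ have the form $P_j(\mathbf{t})\exp(\text{quadratic})$ with Gaussian covariances adding up to $\mathbf{A}^{-1}$. Consequently each $\varphi_j$ corresponds to a genuine probability density $f^{(j)}$ of polynomial-normal form, and by nonnegativity of densities we have $f^{(j)}(\mathbf{x})\ge 0$ for all $\mathbf{x}\in\mathbb{R}^d$. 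The density $f_{2l}$ of $\varphi$ is then the convolution $f^{(1)}\ast f^{(2)}$.

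With the convolution representation in hand, the contradiction is geometric. Suppose $Q(\mathbf{x}_0)=0$, i.e. $f_{2l}(\mathbf{x}_0)=0$, at some real point $\mathbf{x}_0$. Writing
\[
0 = f_{2l}(\mathbf{x}_0) = \int_{\mathbb{R}^d} f^{(1)}(\mathbf{x}_0-\mathbf{y})\,f^{(2)}(\mathbf{y})\,d\mathbf{y},
\]
and using that both integrands are nonnegative, I would conclude that the integrand vanishes almost everywhere, hence (by continuity of the polynomial-normal densities) everywhere. Because the Gaussian factor never vanishes, this would force the product $P_1(\mathbf{x}_0-\mathbf{y})P_2(\mathbf{y})$ to be identically zero in $\mathbf{y}$, which is impossible for nonzero polynomials on $\mathbb{R}^d$ (a product of nonzero polynomials cannot vanish on all of $\mathbb{R}^d$). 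This contradiction shows $Q$ has no real zeros.

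The main obstacle I expect is justifying the factorization step in the multidimensional setting, i.e. showing that any non-trivial factor $\varphi_j$ of a polynomial-normal characteristic function is again of polynomial-normal type with a Gaussian of matching (and positive-definite) covariance. In one dimension this is exactly the content of the Pluci\'{n}ska--Lukacs theorem quoted above, but in $d$ dimensions one must control the growth of each factor jointly in all variables and rule out anisotropic exponential terms; I would handle this by restricting $\varphi$ to lines through the origin to reduce to the one-dimensional order-$2$ theory, and then reassembling the directional information to identify the quadratic exponent of each $\varphi_j$ as a genuine positive-definite quadratic form. The remaining steps (nonnegativity of densities and the vanishing-convolution argument) are then routine.
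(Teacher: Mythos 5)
Your proposal is correct and follows essentially the same route as the paper: reduce to the fact that both factors are themselves polynomial-normal, write the density as a convolution, note that a zero of $Q$ forces the nonnegative continuous integrand to vanish identically, and derive a contradiction from the product of the two polynomial factors vanishing on all of $\mathbb{R}^d$. The factorization step you flag as the main obstacle is exactly what the paper imports wholesale from its companion result (\cite{PWM}, Theorem 2) rather than reproving it, and the only other (cosmetic) difference is that the paper closes the argument with a Baire-category observation about the union of the two zero sets, whereas you invoke the integral-domain property of real polynomials --- both are valid one-line finishes.
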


\begin{proof}
 Let $f_{2l_{1}}$ and $f_{2l_{2}}$
be the densities corresponding to characteristic functions
 $\varphi _{1}$ and $\varphi _{2}$
respectively.Then by \cite{PWM}, Theorem 2, we have
\[
f_{2l_{1}}(\mathbf{x})=\frac{\sqrt{\det \mathbf{A}_{1}}}{\left( 2\pi \right) ^{\frac{d%
}{2}}}p_{2l_{1}}(\mathbf{x})\exp \left( -\frac{1}{2}\left( \mathbf{x}-%
\mathbf{b}\right)^T \mathbf{A}_{1}\left( \mathbf{x}-\mathbf{b}\right)  \right) ,
\]%
\begin{equation}
f_{2l_{2}}(\mathbf{x})=\frac{\sqrt{\det \mathbf{A}_{2}}}{\left( 2\pi \right) ^{\frac{d%
}{2}}}p_{2l_{2}}(\mathbf{x})\exp \left( -\frac{1}{2}\left( \mathbf{x}-%
\mathbf{b}_{2}\right)^T \mathbf{A}_{2}\left( \mathbf{x}-\mathbf{b}_{2}\right) \right) ,  
\label{w}
\end{equation}%
where $p_{2l_{1}},$ $p_{2l_{2}}$ are non-negative polynomials
determinated by the zeros of $\varphi _{1}$ and~$\varphi_{2}$
respectively, $\mathbf{A_{1}}$ and $\mathbf{A_{2}}$ are
non-degenerate, positive defined $d\times{d}$ matrixes and
$\mathbf{b_{1}},\mathbf{b_{2}\in\mathbb{R}^{d}}$. From (\ref{w}),
from the equality $\varphi (\mathbf{t})=\varphi
_{1}(\mathbf{t})\varphi _{2}(\mathbf{t})$
and from Borel theorem for Fourier transformation we have%
\[
f_{2l}(\mathbf{x})=\int\limits_{\mathbb{R}^{d}}f_{2l_{2}}(\mathbf{x-y}%
)f_{2l_{1}}(\mathbf{y})d\mathbf{y=}
\]%
\[
=\frac{\sqrt{\det \mathbf{A}_{1}\det \mathbf{A}_{2}}}{\left( 2\pi \right) ^{d}}\int\limits_{%
\mathbb{R}^{d}}p_{2l_{2}}(\mathbf{x-y})p_{2l_{1}}(\mathbf{y})\exp \left( -%
\frac{1}{2}\left( \mathbf{x-y}-\mathbf{b}_{2}\right)^T \mathbf{A}_{2}\left( \mathbf{x}-%
\mathbf{y}-\mathbf{b}_{2}\right)  \right) \times
\]%
\[
\times \exp \left( -\frac{1}{2}\left( \mathbf{y}-\mathbf{b}_{1}\right)^T
\mathbf{A}_{1}\left( \mathbf{y}-\mathbf{b}_{1}\right)  \right) d\mathbf{y,}  \qquad x\in\mathbb{R}^{d}.
\]%
 (see Maurin \cite{Mau}).

Let us assume that the polynomial $Q$ takes value zero at a point $\mathbf{x_{0}}\in\mathbb{R}^{d}$.

Then $f_{2l}(\mathbf{x}_{0})=0,$ so%
\[
\frac{\sqrt{\det \mathbf{A}_{1}\det \mathbf{A}_{2}}}{\left( 2\pi \right) ^{d}}\int\limits_{%
\mathbb{R}^{d}}p_{2l_{2}}(\mathbf{x}_{0}\mathbf{-y})p_{2l_{1}}(\mathbf{y}%
)\exp \left( -\frac{1}{2}\left( \mathbf{x}_{0}\mathbf{-y}-\mathbf{b}%
_{2}\right)^T \mathbf{A}_{2}\left( \mathbf{x}_{0}-\mathbf{y}-\mathbf{b}_{2}\right)
 \right) \times
\]%
\[
\times \exp \left( -\frac{1}{2}\left( \mathbf{y}-\mathbf{b}_{1}\right)^T
\mathbf{A}_{1}\left( \mathbf{y}-\mathbf{b}_{1}\right)  \right) d\mathbf{y=}0.
\]%
The above equality holds only in case where the integrand function
is equal zero, because it is non-negative and continuous. Then we
have
\[
\text{\ }\bigwedge\limits_{\mathbf{y}\in \mathbb{R}^{d}}p_{2l_{2}}(\mathbf{x}_{0}%
\mathbf{-y})p_{2l_{1}}(\mathbf{y})=0.
\]%
So%
\begin{equation}
\text{\ }\bigwedge\limits_{\mathbf{y}\in \mathbb{R}^{d}}p_{2l_{2}}(\mathbf{x}_{0}%
\mathbf{-y})=0\text{ }\vee p_{2l_{1}}(\mathbf{y})=0. 
\label{war}
\end{equation}%
 Since the polynomials $p_{2l_{1}}$ and $p_{2l_{2}}$ are not equal zero, the sets
\[
\left\{ \mathbf{y}\in \mathbb{R}^{d}:p_{2l_{2}}(\mathbf{x-y})=0\right\} \text{ and }%
\left\{ \mathbf{y}\in \mathbb{R}^{d}:p_{2l_{1}}(\mathbf{y})=0\right\}
\]%
are closed and their interiors in $\mathbb{R}^d$ are empty. Then
their sum has the empty interior (the trivial case of Baire
theorem) and the condition (\ref{war}) is not fulfilled. Hence one
of the polynomials $p_{2l_{1}}$ or $p_{2l_{2}}$ is equal zero -
contradiction.

Finally, the polynomial $Q$ associated with the given
characteristic function has no real zeros.
\end{proof}

Now we present an example which shows that Theorem 2 is not true
in multidimensional case.

\begin{example} {\rm
Let $\left( X_{1},X_{2}\right) $ be the 2-dimensional random variable
with the density%
\[
f(x_{1},x_{2})=\frac{1}{6\pi }\left[ \left( x_{1}x_{2}-1\right)
^{2}+x_{2}^{2}\right] \exp \left\{ -\frac{1}{2}\left(
x_{1}^{2}+x_{2}^{2}\right) \right\} =
\]%
\[
=\frac{1}{2\pi }p_{4}(x_{1},x_{2})\exp \left\{ -\frac{1}{2}\left(
x_{1}^{2}+x_{2}^{2}\right) \right\} .
\]%
Then the characteristic function has the form%
\begin{equation}
\varphi (t_{1},t_{2})=\frac{1}{3}\left[
t_{1}^{2}t_{2}^{2}+2t_{1}t_{2}-2t_{2}^{2}-t_{1}^{2}+3\right] \exp \left\{ -%
\frac{1}{2}\left( t_{1}^{2}+t_{2}^{2}\right) \right\} . 
\label{fchar}
\end{equation}%

By \cite{PWM}, Theorem 2 we know that if $\varphi$ has non-trivial
decomposition $\varphi=\varphi_{1}\varphi_{2}$ then $\varphi_{1}$
and $\varphi_{2}$ are characteristic functions of
polynomial-normal distributions. Let us first prove that in this
case it is a product of characteristic function of
polynomial-normal distribution and characteristic function of
normal distribution. It is a consequence of the fact that the
polynomial
\[ P(t_{1},t_{2})=t_{1}^{2}t_{2}^{2}+2t_{1}t_{2}-2t_{2}^{2}-t_{1}^{2}+3 \]
can not be presented as a product of two polynomials of degree $2.$

In fact let us write the polynomial $P$ as%
\[
W(t_{1})=t_{1}^{2}\left( t_{2}^{2}-1\right) +2t_{1}t_{2}-2t_{2}^{2}+3,
\]%
where for fixed $t_{2}$ it is a polynomial of degree 2 in $%
t_{1}.$ Then%
\[
\Delta =4\left( 2t_{2}^{4}-4t_{2}^{2}+3\right) >0\quad \text{for}\quad t_{2}\in \mathbb{R}.
\]%
So the roots have the form
\begin{equation}
t_{1,2}=\frac{-t_{2}\pm \sqrt{2t_{2}^{4}-4t_{2}^{2}+3}}{t_{2}^{2}-1}.
\label{6pier}
\end{equation}%
If the polynomial $P$ is decomposable %
there are two possibilities

\textbf{1. \ }
$P(t_{1},t_{2})=Q_{1}(t_{1},t_{2})Q_{2}(t_{1},t_{2}),$ where
\[
Q_{j}(t_{1},t_{2})=c_{j}t_{1}t_{2}+d_{j}t_{1}+e_{j}t_{2}+g_{j},\text{ \ }%
j=1,2.
\]%
The polynomial $P$ is equal zero in $\left(
t_{1},t_{2}\right) $ iff one of polynomial $Q_{j}$ ( or both of them) is equal zero. Let%
\[
c_{j}t_{1}t_{2}+d_{j}t_{1}+e_{j}t_{2}+g_{j}=0.
\]%
Then for a fixed $t_{2}$ we have
\begin{equation}
t_{1}=-\frac{e_{j}t_{2}+g_{j}}{c_{j}t_{2}+d_{j}}. \label{3.24} 
\end{equation}%

Formula (\ref{3.24}) is not of the form (\ref{6pier}). Indeed
homography function has only one pole, but roots in (\ref{6pier})
are functions with two poles
 $t_{2}=\pm 1.$

\textbf{2. } Let now %
\[
P(t_{1},t_{2})=Q_{1}(t_{1},t_{2})Q_{2}(t_{1},t_{2}),
\]
where
\[
Q_{1}(t_{1},t_{2})=a_{1}t_{1}^{2}+c_{1}t_{1}t_{2}+d_{1}t_{1}+e_{1}t_{2}+g_{1}
\]%
and%
\[
Q_{2}(t_{1},t_{2})=b_{2}t_{2}^{2}+c_{2}t_{1}t_{2}+d_{2}t_{1}+e_{2}t_{2}+g_{2}.
\]%
For fixed $t_{2}$ and $Q_{1}(t_{1},t_{2})=0$ we have
\[
\Delta
=c_{1}^{2}t_{2}^{2}+2c_{1}d_{1}t_{2}+d_{1}^{2}-4a_{1}e_{1}t_{2}-4a_{1}g_{1},
\]%
\begin{equation}
t_{1}=\frac{-c_{1}t_{2}-d_{1}\pm \sqrt{%
c_{1}^{2}t_{2}^{2}+2c_{1}d_{1}t_{2}+d_{1}^{2}-4a_{1}e_{1}t_{2}-4a_{1}g_{1}}}{%
2a_{1}}. \label{3.25} 
\end{equation}%
When $Q_{2}(t_{1},t_{2})=0$ then%
\begin{equation}
t_{1}=-\frac{b_{2}t_{2}^{2}+e_{2}t_{2}+g_{2}}{c_{2}t_{2}+d_{2}}. \label{3.26} 
\end{equation}%
Functions (\ref{3.25}) and (\ref{3.26}) have only one pole whereas
the function (\ref{6pier}) has two poles. Then the polynomial $P$
is indecomposable. We can also obtain this result  writing $P$ in
the form

\begin{small}\begin{eqnarray*}
P(t_{1},t_{2}) =\left(
a_{1}t_{1}^{2}+b_{1}t_{2}^{2}+c_{1}t_{1}t_{2}+d_{1}t_{1}+e_{1}t_{2}+g_{1}%
\right)    \left(
a_{2}t_{1}^{2}+b_{2}t_{2}^{2}+c_{2}t_{1}t_{2}+d_{2}t_{1}+e_{2}t_{2}+g_{2}%
\right),
\end{eqnarray*}\end{small}%
and comparing the coefficients in successive powers of variables
 $t_{1},t_{2}.$

Hence, if the characteristic function of the form (\ref{fchar}) is
decomposable, then it is a product of the characteristic function
of a polynomial-normal distribution and the characteristic
function of a normal distribution. Let
\begin{align*}
\varphi (t_{1},t_{2})=& \frac{1}{3}\left[
t_{1}^{2}t_{2}^{2}+2t_{1}t_{2}-2t_{2}^{2}-t_{1}^{2}+3\right] \exp \left\{ -%
\frac{1}{2}\left( a_{11}t_{1}^{2}+2a_{12}t_{1}t_{2}+a_{22}t_{2}^{2}\right)
\right\} \times
\\
& \times \exp \left\{ -\frac{1}{2}\left( t_{1}^{2}\left(
1-a_{11}\right) -2a_{12}t_{1}t_{2}+\left( 1-a_{22}\right)
t_{2}^{2}\right) \right\},
\end{align*}
be such a decomposition. We will show that it is not possible
because the function
\begin{equation}
\varphi _{1}(t_{1},t_{2})=\frac{1}{3}\left[
t_{1}^{2}t_{2}^{2}+2t_{1}t_{2}-2t_{2}^{2}-t_{1}^{2}+3\right] \exp \left\{ -%
\frac{1}{2}\left( a_{11}t_{1}^{2}+2a_{12}t_{1}t_{2}+a_{22}t_{2}^{2}\right)
\right\}, \label{3.27} 
\end{equation}
where
\[
  a_{11},a_{22}>0, \quad a_{11}a_{22}-a_{12}^2>0, \quad 1-a_{11},1-a_{22}>0, \quad (1-a_{11})(1-a_{22})-a_{12}^2>0,
\]
could not be a characteristic function of polynomial-normal distribution.

The density function for characteristic function of the form
(\ref{3.27}) is

\[
f_{1}(x_{1},x_{2})=\frac{1}{2\pi }\iint\limits_{\mathbb{R}^{2}}\frac{1}{3}%
\left[ t_{1}^{2}t_{2}^{2}+2t_{1}t_{2}-2t_{2}^{2}-t_{1}^{2}+3\right] \times
\]%
\[
\times \exp \left\{ -\frac{1}{2}\left(
a_{11}t_{1}^{2}+2a_{12}t_{1}t_{2}+a_{22}t_{2}^{2}\right) \right\} \exp
\left\{ -it_{1}x_{1}-it_{2}x_{2}\right\} dt_{2}dt_{1}=
\]%
\[
=\frac{1}{2\pi \sqrt{a_{11}a_{22}-a_{12}^{2}}}\frac{1}{3}\left[ \frac{%
a_{12}^{2}}{\left( a_{11}a_{22}-a_{12}^{2}\right) ^{2}}X^{4}-\frac{2a_{12}}{%
\left( a_{11}a_{22}-a_{12}^{2}\right) ^{\frac{3}{2}}}X^{3}Y+\right.
\]%
\[
+\frac{1}{a_{11}a_{22}-a_{12}^{2}}X^{2}Y^{2}+\left( \frac{6a_{12}}{\left(
a_{11}a_{22}-a_{12}^{2}\right) ^{\frac{3}{2}}}-\frac{4a_{12}}{a_{22}\sqrt{%
a_{11}a_{22}-a_{12}^{2}}}-\frac{2}{\sqrt{a_{11}a_{22}-a_{12}^{2}}}\right) XY+
\]%
\[
+\left( a_{22}+2a_{12}+\frac{a_{12}^{2}}{a_{22}}-1-\frac{6a_{12}^{2}}{%
a_{11}a_{22}-a_{12}^{2}}\right) \frac{X^{2}}{a_{11}a_{22}-a_{12}^{2}}+\left(
\frac{2}{a_{22}}-\frac{1}{a_{11}a_{22}-a_{12}^{2}}\right) Y^{2}+
\]%
\[
\left. +3-\frac{2}{a_{22}}+\frac{3a_{12}^{2}}{a_{11}a_{22}-a_{12}^{2}}+\frac{%
a_{22}}{a_{11}a_{22}-a_{12}^{2}}\left( -1-\frac{2a_{12}}{a_{22}}-\frac{%
a_{12}^{2}}{a_{22}^{2}}+\frac{1}{a_{22}}\right) \right] \times
\]%
\[
\times \exp \left\{ -\frac{1}{2}\left( X^{2}+Y^{2}\right) \right\} =\frac{1}{%
2\pi \sqrt{a_{11}a_{22}-a_{12}^{2}}}\frac{1}{3}\widetilde{Q}(X,Y)\exp
\left\{ -\frac{1}{2}\left( X^{2}+Y^{2}\right) \right\} ,
\]%
where%
\[
X=\frac{x_{1}-\frac{a_{12}}{a_{22}}x_{2}}{\sqrt{a_{11}-\frac{a_{12}^{2}}{%
a_{22}}}},\qquad Y=\frac{x_{2}}{\sqrt{a_{22}}}.
\]%
Let us see that for $a_{12}=0$ the coefficient of $X^4$ disappear
and the coefficient of  $X^2$ is negative. Then for $Y=0$ and for
$X$ large enough the polynomial $\widetilde{Q}$ has negative
values at points $(X,0)$ -- contradiction.

Let us consider the case when $a_{12}\neq 0.$
Now we prove that the polynomial in above density function is non-positive.
First suppose that $a_{12}>0$ and substitute %
\[
T=\sqrt{\frac{a_{12}}{a_{11}a_{22}-a_{12}^{2}}}X,
\]
 $a_{12}>0.$
Then
\[
\widetilde{Q}(X,Y)=T^{4}-\frac{2}{\sqrt{a_{12}}}T^{3}Y+\frac{T^{2}Y^{2}}{%
a_{12}}+\left( \frac{6\sqrt{a_{12}}}{a_{11}a_{22}-a_{12}^{2}}-\frac{4\sqrt{%
a_{12}}}{a_{22}}-\frac{a_{11}}{a_{22}\sqrt{a_{12}}}\right) TY+
\]%
\[
+\left( \frac{a_{22}}{a_{12}}+2+\frac{a_{12}}{a_{22}}-\frac{1}{a_{12}}-\frac{%
6a_{12}}{a_{11}a_{22}-a_{12}^{2}}\right) T^{2}+\left( \frac{2}{a_{22}}-\frac{%
1}{a_{11}a_{22}-a_{12}^{2}}\right) Y^{2}+
\]%
\[
+3-\frac{2}{a_{22}}+\frac{3a_{12}^{2}}{a_{11}a_{22}-a_{12}^{2}}+\frac{a_{22}%
}{a_{11}a_{22}-a_{12}^{2}}\left( -1-\frac{2a_{12}}{a_{22}}-\frac{a_{12}^{2}}{%
a_{22}^{2}}+\frac{1}{a_{22}}\right) =:Q_1(T,Y).
\]%
Let us denote
\[
\widetilde{p}_{4}(x_{1},x_{2})=\left( x_{1}x_{2}-1\right) ^{2}+x_{2}^{2}.
\]%
Thus
\[
Q_1(T,Y)=\widetilde{p}_{4}\left( T,T-\frac{Y}{\sqrt{a_{12}}}%
\right) +\left( 3+\frac{a_{22}}{a_{12}}+\frac{a_{12}}{a_{22}}-\frac{1}{a_{12}%
}-\frac{6a_{12}}{a_{11}a_{22}-a_{12}^{2}}\right) T^{2}+
\]%
\[
+\left( \frac{2}{a_{22}}-\frac{1}{a_{11}a_{22}-a_{12}^{2}}-\frac{1}{a_{12}}%
\right) Y^{2}+\left( \frac{6\sqrt{a_{12}}}{a_{11}a_{22}-a_{12}^{2}}-\frac{2}{%
\sqrt{a_{12}}}-\frac{4\sqrt{a_{12}}}{a_{22}}\right) TY+
\]%
\[
+2-\frac{2}{a_{22}}+\frac{3a_{12}^{2}}{a_{11}a_{22}-a_{12}^{2}}+\frac{a_{22}%
}{a_{11}a_{22}-a_{12}^{2}}\left( -1-\frac{2a_{12}}{a_{22}}-\frac{a_{12}^{2}}{%
a_{22}^{2}}+\frac{1}{a_{22}}\right) .
\]%
Let us substitute
\[
  Y_n=\left( n-\frac{1}{n}\right)\sqrt{a_{12}}\quad \text{i}\quad T_n=n.
\]
Thus
\[
  T_n-\frac{Y_n}{\sqrt{a_{12}}}=\frac{1}{n}
\]
and
\[
  \widetilde{p}_n\left( T_n, T_n-\frac{Y_n}{\sqrt{a_{12}}}\right)=\frac{1}{n^2}.
\]
Hence
\begin{align*}
  Q_1(T_n,Y_n)&=\widetilde{p}_4\left( T_n,T_n-\frac{Y_n}{\sqrt{a_{12}}}\right)+
    \left( \frac{a_{22}}{a_{12}}+3+\frac{a_{12}}{a_{22}}-\frac{1}{a_{12}} - \frac{6a_{12}}{a_{11}a_{22}-a_{12}^2}\right) n^2+ \\[.2cm]
    &+\left( \frac{2}{a_{22}}-\frac{1}{a_{12}}-\frac{1}{a_{11}a_{22}-a_{12}^2}\right) a_{12}\left(n-\frac{1}{n}\right)^2 +\\[.2cm]
    & +
    \left( \frac{6\sqrt{a_{12}}}{a_{11}a_{22}-a_{12}^2} -\frac{2}{\sqrt{a_{12}}} -\frac{4\sqrt{a_{12}}}{a_{22}}\right) \sqrt{a_{12}}\left(n^2-1\right)+ \\[.2cm] &+
    2-\frac{2}{a_{22}}+\frac{1}{a_{11}a_{22}-a_{12}^2} \left(1-a_{22} -2a_{12}-\frac{a_{12}^2}{a_{22}}\right) +
    \frac{3a_{12}^2}{(a_{11}a_{22}-a_{12}^2)^2}.
\end{align*}
Then the coefficient $B_{n^{2}}$ of the variable $n^2$  is equal
to
\[
B_{n^{2}}=\frac{a_{22}}{a_{12}}-\frac{a_{12}}{a_{22}}-\frac{1}{a_{12}}-\frac{%
a_{12}}{a_{11}a_{22}-a_{12}^{2}}.
\]%
We will show that $B_{n^{2}}$ is negative.

Let us assume first that $B_{n^2}$ is not negative. Then
\[
\frac{a_{22}}{a_{12}}\geqslant\frac{a_{12}}{a_{22}}+\frac{1}{a_{12}}+\frac{a_{12}}{%
a_{11}a_{22}-a_{12}^{2}}.
\]%
Successive calculation gives
\[
a_{22}^{2}\geqslant
a_{22}+a_{12}^{2}+\frac{a_{12}^{2}a_{22}}{a_{11}a_{22}-a_{12}^{2}}
\]
(because $a_{12}>0$),
\begin{equation} \label{gw}
a_{12}^{2}\left( a_{12}^{2}-a_{22}^{2}\right)\geqslant
a_{11}a_{22}\left( a_{22}+a_{12}^{2}-a_{22}^{2}\right)
\end{equation}

Since $a_{22}+a_{12}^{2}-a_{22}^{2}=a_{22}(1-a_{22})+a_{12}^{2}>0$
and $a_{11}a_{22}>a_{12}^2$ we have
\[
a_{11}a_{22}\left( a_{22}+a_{12}^{2}-a_{22}^{2}\right)
>a_{12}^{2}\left(a_{22}+a_{12}^{2}-a_{22}^{2}\right).
\]%
Then by (\ref{gw}) we obtain
\[
a_{12}^{2}-a_{22}^{2}\geqslant a_{22}+a_{12}^{2}-a_{22}^{2},
\]%
and therefore
\[
0\geqslant a_{22}.
\]%
The last inequality is false, because $a_{22}>0.$ Then $B_{n^2}<0$
and there  exists such $n\in\mathbb{N},$ that $Q_1(T_n,Y_n)<0.$ It
means that $f_1(x_{1,n},x_{2,n})<0$ at $(x_{1,n},x_{2,n})$
corresponding to $(T_n,Y_n)$ -- contradiction.

When $a_{12}<0$ we shall substitute in the above considerations
$a_{12}$ by $-a_{12}$. We obtain the same contradiction which
means that the function $f_{1}$ is not a density function and
$\varphi$ is indecomposable.

Then the decomposition theorem, which is true in one-dimensional
case is not true in d-dimensional case, where $d>1$.}
\end{example}

In the next part of this paper we will show that if some condition
holds for the associated polynomial of characteristic function
$\varphi$ of d-dimensional polynomial-normal distribution which
has no zeros then the characteristic function is decomposable. Let
us first prove some auxiliary propositions.

\begin{proposition}
Let
\[
Q(\mathbf{x})=\sum\limits_{\left\vert \alpha \right\vert \leq 2m}a_{\alpha }%
\mathbf{x}^{\alpha }
\]%
(where $\alpha$ denotes a multiindex,
$\alpha=(\alpha_1,\dots,\alpha_d)\in \mathbb{Z}_{+}^{d}$ and
$\left\vert \alpha \right\vert = \alpha_1+\ldots+\alpha_d$) be the
polynomial of $d$ variables, of degree $2m$ with real
coefficients, positive on  $\mathbb{R}^{d}.$ Let $Q$ satisfies the
following condition%

\begin{equation}
a:=\inf_{\mathbf{x}\in {\mathbb{R}}^{d}}
\frac{Q\left( x_{1},...,x_{d}\right) }{1+\sum\limits_{\left\vert \alpha \right\vert \leq 2m} \left\vert \mathbf{x}^{\alpha }\right\vert }>0. \label{3.28} 
\end{equation}%
Then there exists $\varepsilon >0$ such that, if%
\[
W(\mathbf{x})=\sum\limits_{\left\vert \alpha \right\vert \leq 2m}b_{\alpha }%
\mathbf{x}^{\alpha },\text{ \ \ \ }\mathbf{x}\in \mathbb{R}^{d}
\]%
and for every $\alpha $
\[
\left\vert a_{\alpha }-b_{\alpha }\right\vert <\varepsilon ,
\]%
then the polynomial $W$ has only positive values on $\mathbb{R}^{d}$.
\end{proposition}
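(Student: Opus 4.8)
The plan is to exploit condition (\ref{3.28}) as a uniform lower bound for $Q$ and to dominate the perturbation $Q-W$ by the very same quantity. Rewriting (\ref{3.28}), for every $\mathbf{x}\in\mathbb{R}^d$ we have $Q(\mathbf{x})\ge a\bigl(1+\sum_{|\alpha|\le 2m}|\mathbf{x}^\alpha|\bigr)$ with $a>0$. This is the decisive structural fact: it says that $Q$ is bounded below not merely by $0$ but by a fixed positive multiple of the full ``size'' $1+\sum_{|\alpha|\le 2m}|\mathbf{x}^\alpha|$ of the monomials that occur in $Q$.

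Next I would estimate the difference of the two polynomials by the triangle inequality. Since $Q$ and $W$ involve the same monomials, $Q(\mathbf{x})-W(\mathbf{x})=\sum_{|\alpha|\le 2m}(a_\alpha-b_\alpha)\mathbf{x}^\alpha$, and hence $|Q(\mathbf{x})-W(\mathbf{x})|\le\sum_{|\alpha|\le 2m}|a_\alpha-b_\alpha|\,|\mathbf{x}^\alpha|<\varepsilon\sum_{|\alpha|\le 2m}|\mathbf{x}^\alpha|\le\varepsilon\bigl(1+\sum_{|\alpha|\le 2m}|\mathbf{x}^\alpha|\bigr)$, where I use $|a_\alpha-b_\alpha|<\varepsilon$ for every $\alpha$ together with the fact that the $\alpha=0$ term $|\mathbf{x}^0|=1$ is strictly positive, which keeps the middle inequality strict for every $\mathbf{x}$.

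I would then combine the two estimates: $W(\mathbf{x})=Q(\mathbf{x})-(Q(\mathbf{x})-W(\mathbf{x}))\ge Q(\mathbf{x})-|Q(\mathbf{x})-W(\mathbf{x})|>(a-\varepsilon)\bigl(1+\sum_{|\alpha|\le 2m}|\mathbf{x}^\alpha|\bigr)$. Choosing $\varepsilon:=a$ (or any $\varepsilon\le a$) forces $a-\varepsilon\ge 0$, and since $1+\sum_{|\alpha|\le 2m}|\mathbf{x}^\alpha|\ge 1>0$ the strict inequality already gives $W(\mathbf{x})>0$ for all $\mathbf{x}\in\mathbb{R}^d$, which is exactly the claim.

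There is no genuine obstacle here; the argument is a stability estimate, and the only points requiring care are bookkeeping ones: keeping the middle inequality strict (which is why $\varepsilon=a$, and not merely $\varepsilon<a$, already suffices) and matching the normalization in the denominator of (\ref{3.28}). It is worth recording where hypothesis (\ref{3.28}) is essential: mere positivity of $Q$ would permit $Q(\mathbf{x})\to 0$ along some direction while $1+\sum_{|\alpha|\le 2m}|\mathbf{x}^\alpha|\to\infty$, in which case no uniform $\varepsilon$ could prevent an arbitrarily small change of the coefficients from making $W$ negative. Condition (\ref{3.28}) is precisely the quantitative strengthening of positivity that excludes this degeneracy.
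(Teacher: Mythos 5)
Your argument is correct and is essentially the paper's own proof: the same decomposition $W = (W-Q) + Q$, the same triangle-inequality bound of the perturbation by $\varepsilon\bigl(1+\sum_{|\alpha|\le 2m}|\mathbf{x}^\alpha|\bigr)$, and the same use of (\ref{3.28}) rewritten as $Q(\mathbf{x})\ge a\bigl(1+\sum_{|\alpha|\le 2m}|\mathbf{x}^\alpha|\bigr)$. The only (harmless) difference is that by tracking strictness via the constant term you may take $\varepsilon=a$, whereas the paper settles for $\varepsilon<a$.
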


\begin{proof}
If $\mathbf{x}=\left( x_{1},...,x_{d}\right) \in \mathbb{R}^{d},$ then%
\begin{equation}
W(\mathbf{x})=\sum\limits_{\left\vert \alpha \right\vert \leq 2m}b_{\alpha }%
\mathbf{x}^{\alpha }=\sum\limits_{\left\vert \alpha \right\vert \leq
2m}\left( b_{\alpha }-a_{\alpha }\right) \mathbf{x}^{\alpha
}+\sum\limits_{\left\vert \alpha \right\vert \leq 2m}a_{\alpha }\mathbf{x}%
^{\alpha }\geq \label{3.29} 
\end{equation}%
\[
\geq Q(\mathbf{x})-\sum\limits_{\left\vert \alpha \right\vert \leq
2m}\left\vert b_{\alpha }-a_{\alpha }\right\vert \left\vert \mathbf{x}%
^{\alpha }\right\vert \geq Q(\mathbf{x})-\left( \sum\limits_{\left\vert
\alpha \right\vert \leq 2m}\varepsilon \left\vert \mathbf{x}^{\alpha
}\right\vert +\varepsilon \right) .
\]%
From (\ref{3.28}) we have%
\[
\frac{Q\left( \mathbf{x}\right) }{1+\sum\limits_{\left\vert \alpha
\right\vert \leq 2m}\left\vert \mathbf{x}^{\alpha }\right\vert }\geq a,
\]%
so%
\begin{equation}
Q\left( \mathbf{x}\right) \geq a\left( 1+\sum\limits_{\left\vert \alpha
\right\vert \leq 2m}\left\vert \mathbf{x}^{\alpha }\right\vert \right) .
 \label{3.30} 
\end{equation}%
From (\ref{3.29}) and (\ref{3.30}) we have%
\[
W(\mathbf{x})\geq a\left( 1+\sum\limits_{\left\vert \alpha \right\vert \leq
2m}\left\vert \mathbf{x}^{\alpha }\right\vert \right) -\varepsilon \left(
1+\sum\limits_{\left\vert \alpha \right\vert \leq 2m}\left\vert \mathbf{x}%
^{\alpha }\right\vert \right)
=\left( a-\varepsilon \right) \left( 1+\sum\limits_{\left\vert \alpha
\right\vert \leq 2m}\left\vert \mathbf{x}^{\alpha }\right\vert \right) .
\]%
If $\varepsilon <a,$ then $W(\mathbf{x})>0,$ and the theorem is proved.

\end{proof}

Let us see that for every $\mathbf{x}\in \mathbb{R}^{d}$ the following equalities and inequalities hold

\begin{equation}
1+\sum\limits_{j=1}^{d}\left\vert x_{j}\right\vert ^{2m}\leq
1+\sum\limits_{\left\vert \alpha \right\vert \leq 2m}\left\vert \mathbf{x}%
^{\alpha }\right\vert  \label{3.31} 
\end{equation}%
(the left hand side is the addend of the right hand side);
\[
1+\sum\limits_{\left\vert \alpha \right\vert \leq 2m}\left\vert \mathbf{x}%
^{\alpha }\right\vert =1+\sum\limits_{\left\vert \alpha \right\vert \leq
2m}\left\vert x_{1}\right\vert ^{\alpha _{1}}..\left\vert x_{d}\right\vert
^{\alpha _{d}}\leq 1+\sum\limits_{\left\vert \alpha \right\vert \leq
2m}\left( 1+\sum\limits_{j=1}^{d}\left\vert x_{j}\right\vert \right)
^{\alpha _{1}}...\left( 1+\sum\limits_{j=1}^{d}\left\vert x_{j}\right\vert
\right) ^{\alpha _{d}}\leq
\]%
\begin{equation}
\leq \left( 1+\sum\limits_{j=1}^{d}\left\vert x_{j}\right\vert \right)
^{2m}+\sum\limits_{\left\vert \alpha \right\vert \leq 2m}\left(
1+\sum\limits_{j=1}^{d}\left\vert x_{j}\right\vert \right) ^{2m}=\left(
1+E_{2m}^{d}\right) \left( 1+\sum\limits_{j=1}^{d}\left\vert
x_{j}\right\vert \right) ^{2m}, \label{3.32} 
\end{equation}%
where $E_{2m}^{d}$ is the number of elements of the set $\left\{
\alpha \in
\mathbb{Z}_{+}^{d}:\left\vert \alpha \right\vert \leq 2m\right\} ;$%
\[
\left\{
\begin{array}{c}
1\leq \sqrt[2m]{1+\sum\limits_{j=1}^{d}\left\vert x_{j}\right\vert ^{2m}},
\\
\left\vert x_{j}\right\vert \leq \sqrt[2m]{1+\sum\limits_{k=1}^{d}\left%
\vert x_{k}\right\vert ^{2m}}\text{ \ for \ }j=1,2,\ldots,d%
\end{array}%
\right.
\]%
and
\begin{align}
\left( 1+\sum\limits_{j=1}^{d}\left\vert x_{j}\right\vert \right) ^{2m} & \leq
\left( \sqrt[2m]{1+\sum\limits_{j=1}^{d}\left\vert x_{j}\right\vert ^{2m}}%
+\sum\limits_{j=1}^{d}\sqrt[2m]{1+\sum\limits_{k=1}^{d}\left\vert
x_{k}\right\vert ^{2m}}\right) ^{2m} =\nonumber \\ & =\left(
1+d\right) ^{2m}\left( 1+\sum\limits_{j=1}^{d}\left\vert
x_{j}\right\vert ^{2m}\right) .
\label{3.33} 
\end{align}%
From (\ref{3.31}), (\ref{3.32}) and (\ref{3.33}) we have
\[
1+\sum\limits_{j=1}^{d}\left\vert x_{j}\right\vert ^{2m}\leq
1+\sum\limits_{\left\vert \alpha \right\vert \leq 2m}\left\vert \mathbf{x}%
^{\alpha }\right\vert \leq \left( 1+E_{2m}^{d}\right) \left(
1+\sum\limits_{j=1}^{d}\left\vert x_{j}\right\vert \right) ^{2m}\leq
\]%
\begin{equation*}
\leq \left( 1+E_{2m}^{d}\right) \left( 1+d\right) ^{2m}\left(
1+\sum\limits_{j=1}^{d}\left\vert x_{j}\right\vert ^{2m}\right). 
\end{equation*}%
Then we can replace the condition in the above proposition to the
one of the following equivalent conditions
\begin{equation}
b:=\inf_{\mathbf{x}\in \mathbb{R}^{d}}\frac{Q\left( x_{1},...,x_{d}\right) }{%
1+\sum\limits_{j=1}^{d}\left\vert x_{j}\right\vert ^{2m}}>0 ,\label{3.35(RW1)} 
\end{equation}%
\begin{equation}
c:=\inf_{\mathbf{x}\in \mathbb{R}^{d}}\frac{Q\left( x_{1},...,x_{d}\right) }{%
\left( 1+\sum\limits_{j=1}^{d}\left\vert x_{j}\right\vert \right) ^{2m}}>0 .
\label{3.36(RW2)} 
\end{equation}

\begin{proposition}
Let $Q(\mathbf{x})=\sum\limits_{\left\vert \alpha \right\vert
\leq 2m}a_{\alpha }\mathbf{x}^{\alpha }$ be a polynomial of degree $2m$ with real coefficients, positive on
 $\mathbb{R}^{d}.$ Then (\ref{3.28}) is equivalent to the following condition %

\begin{equation}
\text{for every }\ 1\leq j\leq d\qquad\text{we have }\quad \widetilde{a}_{j,2m}>0 ,
\label{3.37(RW)} 
\end{equation}%
where $\widetilde{a}_{j,2m}$ is the coefficient of $x_{j}^{2m}$ in
$Q$.

\end{proposition}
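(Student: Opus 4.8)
The plan is to reduce the Proposition to the reformulations already obtained, namely that condition (\ref{3.28}) is equivalent to (\ref{3.35(RW1)}), i.e. to $b>0$, where
\[
b=\inf_{\mathbf{x}\in\mathbb{R}^d}\frac{Q(\mathbf{x})}{1+\sum_{j=1}^d|x_j|^{2m}}.
\]
It therefore suffices to prove that $b>0$ holds if and only if $\widetilde a_{j,2m}>0$ for every $1\le j\le d$. I would split this into two implications, dispatch the forward one by a direct one-dimensional argument, and reserve the reverse one for a compactness argument on the unit sphere.

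For the implication $b>0\Rightarrow(\ref{3.37(RW)})$, I would restrict $Q$ to the $j$-th coordinate axis. Putting $x_i=0$ for $i\neq j$ and $x_j=t$, every surviving monomial is a pure power $x_j^{\alpha_j}$, so $q_j(t):=Q(0,\dots,0,t,0,\dots,0)$ is a one-variable polynomial of degree at most $2m$ whose coefficient of $t^{2m}$ is precisely $\widetilde a_{j,2m}$. Since $Q>0$ we have $q_j(t)>0$ for all $t$, whence $\widetilde a_{j,2m}\ge 0$ (it is either the positive leading coefficient of an even-degree positive polynomial or else $0$). If $\widetilde a_{j,2m}=0$, then $\deg q_j<2m$ and
\[
\frac{q_j(t)}{1+|t|^{2m}}\longrightarrow 0\qquad(|t|\to\infty),
\]
which forces $b\le 0$ and contradicts $b>0$. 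Hence $\widetilde a_{j,2m}>0$ for each $j$.

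The reverse implication $(\ref{3.37(RW)})\Rightarrow b>0$ is the substantial part. Since $Q$ is continuous and strictly positive, the quotient is positive and bounded below on every compact set, so the only danger is at infinity. Writing $\mathbf{x}=r\omega$ with $r=|\mathbf{x}|$ and $\omega\in S^{d-1}$, and letting $Q_{2m}$ denote the homogeneous part of $Q$ of degree $2m$, one has
\[
\frac{Q(r\omega)}{1+r^{2m}\sum_{j=1}^d|\omega_j|^{2m}}\longrightarrow\frac{Q_{2m}(\omega)}{\sum_{j=1}^d|\omega_j|^{2m}}\qquad(r\to\infty),
\]
and the denominator $\sum_j|\omega_j|^{2m}$ is continuous and strictly positive on the compact sphere, hence bounded below by a positive constant. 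Consequently $b>0$ would follow from a uniform growth estimate together with the compactness of $S^{d-1}$, provided one knows that the leading form satisfies $Q_{2m}(\omega)>0$ for every $\omega\in S^{d-1}$.

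This last point is exactly where I expect the main obstacle to lie. The hypothesis $\widetilde a_{j,2m}>0$ records only the values of $Q_{2m}$ at the coordinate directions $\omega=\mathbf{e}_j$, while global positivity of $Q$ guarantees only $Q_{2m}\ge 0$ on the whole sphere, not strict positivity. In dimension one the two coincide, since there $Q_{2m}$ reduces to the single term $\widetilde a_{1,2m}x^{2m}$; in higher dimensions the genuine content is the propagation from positivity of the pure-power coefficients to positive definiteness of the full leading form, which requires controlling the mixed (cross) terms. Ruling out a direction $\omega_0$ with $Q_{2m}(\omega_0)=0$, along which the quotient tends to $0$ and $b$ collapses, is therefore the delicate step on which the whole equivalence rests, and it is the part I would expect to demand the most care.
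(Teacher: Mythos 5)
Your first implication, $(\ref{3.28})\Rightarrow(\ref{3.37(RW)})$, is complete and is essentially the paper's own argument: restrict $Q$ to the $j$-th coordinate axis, note that the resulting one-variable polynomial is positive so its degree-$2m$ coefficient $\widetilde a_{j,2m}$ is nonnegative, and observe that if it vanished the quotient in (\ref{3.35(RW1)}) would tend to $0$ along that axis.

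The reverse implication, however, is not proved in your proposal: you reduce it to strict positivity of the leading form $Q_{2m}$ on the unit sphere and then correctly point out that the hypothesis only controls $Q_{2m}$ at the coordinate directions --- and you stop there. This is a genuine gap, and in fact it cannot be filled, because the implication $(\ref{3.37(RW)})\Rightarrow(\ref{3.28})$ is false as stated. Take $d=2$, $m=1$ and
\[
Q(x_1,x_2)=(x_1-x_2)^2+1=x_1^2-2x_1x_2+x_2^2+1 .
\]
This $Q$ is positive on $\mathbb{R}^2$ and $\widetilde a_{1,2}=\widetilde a_{2,2}=1>0$, yet along the diagonal $x_1=x_2=t$ the quotient in (\ref{3.28}) (equivalently in (\ref{3.35(RW1)})) equals $1/(2+2|t|+3t^2)\to 0$, so the infimum is $0$. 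The paper's own proof of this direction breaks at its very first step, where $Q$ is written as $\sum_{j}\widetilde a_{j,2m}x_j^{2m}+\sum_{|\alpha|<2m}a_\alpha\mathbf{x}^\alpha$: this silently discards every mixed monomial of top degree (such as $-2x_1x_2$ above), i.e.\ it assumes the leading form of $Q$ is diagonal. Under that extra assumption the leading form is $\sum_j\widetilde a_{j,2m}x_j^{2m}$, which is positive definite exactly when all $\widetilde a_{j,2m}>0$, and then both the paper's limit argument and your sphere-compactness argument close without difficulty. In general the correct hypothesis replacing (\ref{3.37(RW)}) is positive definiteness of the full leading form $Q_{2m}$ on $S^{d-1}$ --- precisely the condition you identified as the missing step. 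So your diagnosis of where the difficulty lies is exactly right; what you flagged as ``the delicate step'' is in fact a counterexample waiting to happen, both for your outline and for the paper's proof.
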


\begin{proof}
If $e_{j}$ is the $j$-th vector of the standard base in
$\mathbb{R}^{d}$
 \ $e_{j}=\left( \delta _{j1},\delta _{j2},...,\delta _{jd}\right) ,$
where
\[
\delta _{ji}=\left\{
\begin{array}{c}
1\quad\text{for \ }i=j, \\
0\quad\text{for \ }i\neq j,%
\end{array}%
\right.
\]%
is the Kronecker's symbol, then for every $r\in \left\{
0,1,...,2m\right\} $ the sequence $re_{j}\in \mathbb{Z}_+^d$ is
$d-$index of length $\left\vert re_{j}\right\vert \leq 2m.$ Let us
denote
\[
\widetilde{a}_{j,r}:=a_{re_{j}}.
\]

$(\ref{3.28})\Longrightarrow \left(\ref{3.37(RW)}\right) .$ Let us see that for every
 $1\leq j\leq d$ polynomial of one variable
\[
Q\left( x_{j}e_{j}\right) =\sum\limits_{r=0}^{2m}\widetilde{a}%
_{j,r}x_{j}^{r},\text{ \ \ }x_{j}\in \mathbb{R},
\]%
has positive values on $\mathbb{R}.$ Then $%
a_{j,0}=a_{0}=Q(0)>0$ and $\widetilde{a}_{j,2m}\geq 0.$

Let $\widetilde{a}_{j,2m}=0.$ Then
\[
\lim_{x_{j}\rightarrow +\infty }\frac{Q\left( x_{j}e_{j}\right) }{%
1+\left\vert x_{j}\right\vert ^{2m}}=\lim_{x_{j}\rightarrow
+\infty
}\sum\limits_{r=0}^{2m-1}\frac{\widetilde{a}_{j,r}x_{j}^{r}}{1+\left\vert
x_{j}\right\vert ^{2m}}=0
\]%
and
\[
\inf_{\mathbf{x}\in\mathbb{R}^{d}}\frac{Q\left( \mathbf{x}\right) }{%
1+\sum\limits_{j=1}^{d}\left\vert x_{j}\right\vert ^{2m}}=0.
\]%
The last equality contradicts the condition (\ref{3.35(RW1)}), so
it contradicts (\ref{3.28}). Then we must have
$\widetilde{a}_{j,2m}>0.$

$(\ref{3.28})\Longleftarrow \left(\ref{3.37(RW)}\right) .$ Let us write
 the polynomial $Q$ in the form%
\[
Q(\mathbf{x)=}\sum\limits_{j=1}^{d}\widetilde{a}_{j,2m}x_{j}^{2m}+\sum%
\limits_{\left\vert \alpha \right\vert <2m}a_{\alpha }\mathbf{x}^{\alpha },%
\text{ \ \ \ \ }\mathbf{x=}\left( x_{1},...,x_{d}\right) \in \mathbb{R}^{d},
\]%
and let us denote \ $\left\vert \left\vert \mathbf{x}\right\vert \right\vert
_{1}=\sum\limits_{j=1}^{d}\left\vert x_{j}\right\vert $ ($l^{1}-$norm in $%
\mathbb{R}^{d}$). From consideration concerning inequality
(\ref{3.32}) follows that for every $\alpha \in
\mathbb{Z}_{+}^{d}$ such that $\left\vert \alpha \right\vert =r$
the following equality holds
\[
\frac{\left\vert \mathbf{x}^{\alpha }\right\vert }{\left(
1+\sum\limits_{j=1}^{d}\left\vert x_{j}\right\vert \right) ^{2m}}\leq \frac{%
\left( 1+\sum\limits_{j=1}^{d}\left\vert x_{j}\right\vert \right) ^{r}}{%
\left( 1+\sum\limits_{j=1}^{d}\left\vert x_{j}\right\vert \right) ^{2m}}=%
\frac{1}{\left( 1+\left\vert \left\vert \mathbf{x}\right\vert \right\vert
_{1}\right) ^{2m-r}}.
\]%
Then taking (\ref{3.33}) we have
\[
\frac{\left\vert \sum\limits_{\left\vert \alpha \right\vert <2m}a_{\alpha }%
\mathbf{x}^{\alpha }\right\vert }{1+\sum\limits_{j=1}^{d}\left\vert
x_{j}\right\vert ^{2m}}\leq \frac{\left( 1+d\right) ^{2m}\left\vert
\sum\limits_{\left\vert \alpha \right\vert <2m}a_{\alpha }\mathbf{x}%
^{\alpha }\right\vert }{\left( 1+\left\vert \left\vert \mathbf{x}\right\vert
\right\vert _{1}\right) ^{2m}}\leq \sum\limits_{\left\vert \alpha
\right\vert <2m}\frac{\left\vert a_{\alpha }\right\vert }{\left(
1+\left\vert \left\vert \mathbf{x}\right\vert \right\vert _{1}\right)
^{2m-\left\vert \alpha \right\vert }}\left( 1+d\right) ^{2m}
\]%
and consistently
\begin{equation*}
\lim_{\left\vert \left\vert \mathbf{x}\right\vert \right\vert
_{1}\rightarrow +\infty }\frac{\sum\limits_{\left\vert \alpha \right\vert
<2m}a_{\alpha }\mathbf{x}^{\alpha }}{1+\sum\limits_{j=1}^{d}\left\vert
x_{j}\right\vert ^{2m}}=\lim_{\left\vert \left\vert \mathbf{x}\right\vert
\right\vert _{1}\rightarrow +\infty }\frac{\sum\limits_{\left\vert \alpha
\right\vert <2m}a_{\alpha }\mathbf{x}^{\alpha }}{\left( 1+\left\vert
\left\vert \mathbf{x}\right\vert \right\vert _{1}\right) ^{2m}}=0.
\end{equation*}
Let us denote
\[
b_{0}:=\min_{1\leq j\leq d}\widetilde{a}_{j,2m}.
\]%
Then
\[
  \begin{CD}
  \displaystyle \frac{\sum\limits_{j=1}^{d}\widetilde{a}_{j,2m}x_{j}^{2m}}{%
1+\sum\limits_{j=1}^{d}x_{j}^{2m}}\geq \frac{b_{0}\sum%
\limits_{j=1}^{d}x_{j}^{2m}}{1+\sum\limits_{j=1}^{d}x_{j}^{2m}}
    @>>{\left\vert \left\vert \mathbf{x}\right\vert \right\vert
_{1}\rightarrow +\infty}> b_0
  \end{CD}
\]
(Because in $\mathbb{R}^{d}$ $l^{1}-$norm $\left\vert \left\vert
\mathbf{\cdot }\right\vert \right\vert _{1}$ is equivalent $%
l^{2m}- $norm $\left\vert \left\vert \mathbf{\cdot }\right\vert
\right\vert _{2m}, $ so $\sum\limits_{j=1}^{d}x_{j}^{2m}=\left(
\left\vert \left\vert \mathbf{x}\right\vert \right\vert _{2m}\right)
^{2m}\rightarrow \infty ,$ when $\left\vert \left\vert \mathbf{x}\right\vert
\right\vert _{1}\rightarrow \infty $).

Let us take $R>0$ such that for $\left\vert \left\vert \mathbf{x}%
\right\vert \right\vert _{1}>R$ the following inequalities hold%
\[
\left\vert \frac{\sum\limits_{\left\vert \alpha \right\vert <2m}a_{\alpha }%
\mathbf{x}^{\alpha }}{1+\sum\limits_{j=1}^{d}x_{j}^{2m}}\right\vert <\frac{%
b_{0}}{3},
\]%

\[
\frac{\sum\limits_{j=1}^{d}\widetilde{a}_{j,2m}x_{j}^{2m}}{%
1+\sum\limits_{j=1}^{d}x_{j}^{2m}}\geq \frac{2}{3}b_{0}.
\]%
Let us denote by $\overline{K_{1}}(0,R)$ the closed ball in $%
\mathbb{R}^{d}$ with center at $\mathbf{x=0}$ and radius $R$ with respect to the
metric given by the norm $\left\vert \left\vert \mathbf{\cdot }%
\right\vert \right\vert _{1}.$ Then for $\mathbf{x}\in \mathbb{R}^{d}\setminus \overline{K_{1}}(0,R)$
\[
\frac{Q(\mathbf{x})}{1+\sum\limits_{j=1}^{d}x_{j}^{2m}}=\frac{%
\sum\limits_{j=1}^{d}\widetilde{a}_{j,2m}x_{j}^{2m}}{1+\sum%
\limits_{j=1}^{d}x_{j}^{2m}}+\frac{\sum\limits_{\left\vert \alpha
\right\vert <2m}a_{\alpha }\mathbf{x}^{\alpha }}{1+\sum%
\limits_{j=1}^{d}x_{j}^{2m}}\geq \frac{\sum\limits_{j=1}^{d}\widetilde{a}%
_{j,2m}x_{j}^{2m}}{1+\sum\limits_{j=1}^{d}x_{j}^{2m}}-\left\vert \frac{%
\sum\limits_{\left\vert \alpha \right\vert <2m}a_{\alpha }\mathbf{x}%
^{\alpha }}{1+\sum\limits_{j=1}^{d}x_{j}^{2m}}\right\vert >\frac{b_{0}}{3}.
\]%
We have%
\[
\inf_{\mathbf{x}\in \mathbb{R}^{d}}\frac{Q(\mathbf{x})}{1+\sum%
\limits_{j=1}^{d}x_{j}^{2m}}\geq \min \left\{ \frac{b_{0}}{3},\inf_{\mathbf{%
x}\in \overline{K_{1}}(0,R)}\frac{Q(\mathbf{x})}{1+\sum%
\limits_{j=1}^{d}x_{j}^{2m}}\right\} >0,
\]%
because the greatest lower bound of the continuous function%
\[
h(\mathbf{x)=}\frac{Q(\mathbf{x})}{1+\sum\limits_{j=1}^{d}x_{j}^{2m}},\text{
\ \ }\mathbf{x}\in \mathbb{R}^{d},
\]%
on the compact set $\overline{K_{1}}(0,R)$ is the value of the
function at some point
 $\overline{\mathbf{x}}\in \overline{K_{1}}(0,R),$ so it is
a positive number. It means that the condition (\ref{3.35(RW1)})
holds, so (\ref{3.37(RW)}) is fulfilled.
\end{proof}

Finally from Proposition 5 and Proposition 6 we have

\begin{theorem}
Let%
\[
Q(\mathbf{x})=\sum\limits_{\left\vert \alpha \right\vert \leq 2m}a_{\alpha }%
\mathbf{x}^{\alpha }
\]%
be the polynomial with real coefficients of degree $2m,$ positive
on $\mathbb{R}^{d}$ and such that the condition (\ref{3.37(RW)})
is satisfied. Then there exists
 $\varepsilon >0$ such that if%
\[
W(\mathbf{x)=}\sum\limits_{\left\vert \alpha \right\vert \leq 2m}b_{\alpha }%
\mathbf{x}^{\alpha },\text{ \ }\mathbf{x}\in \mathbb{R}^{d},
\]%
is the polynomial with real coefficients fulfilling inequalities

\[
\left\vert a_{\alpha }-b_{\alpha }\right\vert <\varepsilon ,
\]%
for every $\alpha$, $\vert{\alpha} \vert<2m, $ then the polynomial
$W$ takes only positive values on $\mathbb{R}^{d}.$
\end{theorem}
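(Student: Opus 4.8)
The plan is to read Theorem 7 directly off the two auxiliary propositions, applying them in the order Proposition 6 then Proposition 5; essentially all the analytic work has already been done there, so the present argument is a short chaining of hypotheses. First I would use Proposition 6 to convert the stated hypothesis into the coercivity form that Proposition 5 requires. Indeed, $Q$ is assumed positive on $\mathbb{R}^{d}$ and to satisfy (\ref{3.37(RW)}), i.e.\ every coefficient $\widetilde{a}_{j,2m}$ of $x_{j}^{2m}$ is strictly positive; Proposition 6 then yields that condition (\ref{3.28}) holds, so that $a:=\inf_{\mathbf{x}} Q(\mathbf{x})/\bigl(1+\sum_{|\alpha|\leq 2m}|\mathbf{x}^{\alpha}|\bigr)>0$. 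This strictly positive infimum is exactly the quantitative lower bound on which Proposition 5 is built.

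Next I would invoke Proposition 5 with this $Q$. Since (\ref{3.28}) holds, Proposition 5 supplies an $\varepsilon>0$ (one may take any $\varepsilon<a$) with the property that every $W(\mathbf{x})=\sum_{|\alpha|\leq 2m} b_{\alpha}\mathbf{x}^{\alpha}$ satisfying $|a_{\alpha}-b_{\alpha}|<\varepsilon$ for all $\alpha$ is strictly positive on $\mathbb{R}^{d}$. The one point that must be handled with care is the reconciliation with the hypothesis of Theorem 7, which imposes the bound $|a_{\alpha}-b_{\alpha}|<\varepsilon$ only for the lower multiindices $|\alpha|<2m$. I read this as keeping the top-degree coefficients unchanged, $b_{\alpha}=a_{\alpha}$ for $|\alpha|=2m$; for those indices one then has $|a_{\alpha}-b_{\alpha}|=0<\varepsilon$ automatically, so the full hypothesis of Proposition 5 is met and $W>0$ follows.

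Should a self-contained argument be preferred — it also explains why the top-degree coefficients have to be frozen — I would argue directly from the equivalent coercivity estimate (\ref{3.36(RW2)}), namely $Q(\mathbf{x})\geq c\,(1+\sum_{j}|x_{j}|)^{2m}$ with $c>0$. Writing $S:=1+\sum_{j}|x_{j}|\geq 1$ and $W=Q+R$ with $R(\mathbf{x})=\sum_{|\alpha|<2m}(b_{\alpha}-a_{\alpha})\mathbf{x}^{\alpha}$, the elementary bound $|\mathbf{x}^{\alpha}|\leq S^{|\alpha|}$ from the discussion around (\ref{3.32}) gives $|R(\mathbf{x})|\leq \varepsilon\,E_{2m-1}^{d}\,S^{2m-1}$, whence $W(\mathbf{x})\geq S^{2m-1}(cS-\varepsilon E_{2m-1}^{d})\geq S^{2m-1}(c-\varepsilon E_{2m-1}^{d})$. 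This is strictly positive for every $\mathbf{x}$ as soon as $\varepsilon<c/E_{2m-1}^{d}$.

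The main obstacle here is not analytic but a matter of correct bookkeeping: the genuine content — establishing the coercivity of $Q$ and its stability under perturbation — already resides in Propositions 5 and 6, so the task is only to verify that their hypotheses are precisely furnished by those of Theorem 7. The single delicate issue is the asymmetric treatment of the coefficients: the perturbation is controlled only for $|\alpha|<2m$ because the coercivity is driven entirely by the leading terms $x_{j}^{2m}$, and these must therefore be held fixed for either version of the argument to close.
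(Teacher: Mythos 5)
Your proof is correct and takes essentially the same route as the paper, whose entire argument for Theorem 7 is the one-line observation that it follows from combining Proposition 6 (which converts condition (\ref{3.37(RW)}) into the coercivity condition (\ref{3.28})) with Proposition 5. Your explicit handling of the quantifier mismatch between ``$|\alpha|<2m$'' in Theorem 7 and ``every $\alpha$'' in Proposition 5 --- by freezing the top-degree coefficients so that $|a_\alpha-b_\alpha|=0<\varepsilon$ there --- is a point the paper glosses over entirely, and your self-contained coercivity estimate via (\ref{3.36(RW2)}) is a correct, if redundant, addition.
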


We show that property (\ref{3.37(RW)})
is invariant with respect to superposition of the polynomial $Q$
with nonsingular linear map of $\mathbb{R}^{d}.$

\begin{proposition}
If a positive polynomial $Q\in \mathbb{R}\left[ x_{1},...,x_{d}\right] $
of degree $2m$ satisfies condition (\ref{3.37(RW)}) and $\mathcal{A}$ is an
affine isomorphism of $\mathbb{R}^{d},$ then the polynomial $Q_{%
\mathcal{A}}:=Q\circ \mathcal{A}\in \mathbb{R}\left[ x_{1},...,x_{d}\right] $
is positive and satisfies condition (\ref{3.37(RW)}).
\end{proposition}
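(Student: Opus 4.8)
The plan is to reduce the claim to the analytic coercivity condition rather than to track how the individual coefficients $\widetilde{a}_{j,2m}$ transform, since composing $Q$ with $\mathcal{A}$ mixes all monomials and the behaviour of the pure powers $x_j^{2m}$ cannot be read off directly. Write the affine isomorphism as $\mathcal{A}(\mathbf{x})=M\mathbf{x}+\mathbf{v}$ with $M\in\mathrm{GL}_d(\mathbb{R})$ and $\mathbf{v}\in\mathbb{R}^d$. Then $Q_{\mathcal{A}}$ is again a real polynomial, and its positivity is immediate: since $\mathcal{A}$ is a bijection of $\mathbb{R}^d$ and $Q>0$ everywhere, $Q_{\mathcal{A}}(\mathbf{x})=Q(\mathcal{A}(\mathbf{x}))>0$ for all $\mathbf{x}$. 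Moreover $Q_{\mathcal{A}}$ still has degree $2m$: its top-degree homogeneous part is the composition $Q_{2m}\circ M$ of the leading form $Q_{2m}$ of $Q$ with the linear part $M$, and this does not vanish identically because $M$ is invertible and $Q_{2m}\not\equiv 0$.

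It remains to verify condition (\ref{3.37(RW)}) for $Q_{\mathcal{A}}$. Since $Q_{\mathcal{A}}$ is now known to be positive of degree $2m$, Proposition~6 lets me replace (\ref{3.37(RW)}) by the equivalent condition (\ref{3.28}); and by the chain of equivalences established between Proposition~5 and Proposition~6, it suffices to check (\ref{3.36(RW2)}) for $Q_{\mathcal{A}}$, that is,
\[
\inf_{\mathbf{x}\in\mathbb{R}^d}\frac{Q_{\mathcal{A}}(\mathbf{x})}{\left(1+\|\mathbf{x}\|_1\right)^{2m}}>0,
\]
where $\|\mathbf{x}\|_1=\sum_{j=1}^d|x_j|$. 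Because $Q$ itself satisfies (\ref{3.37(RW)}), the same equivalences furnish a constant $c>0$ with $Q(\mathbf{y})\ge c\left(1+\|\mathbf{y}\|_1\right)^{2m}$ for every $\mathbf{y}\in\mathbb{R}^d$. Substituting $\mathbf{y}=M\mathbf{x}+\mathbf{v}$ gives
\[
Q_{\mathcal{A}}(\mathbf{x})\ge c\left(1+\|M\mathbf{x}+\mathbf{v}\|_1\right)^{2m}.
\]

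The one point requiring care---the heart of the argument---is to bound $\|M\mathbf{x}+\mathbf{v}\|_1$ from below in terms of $\|\mathbf{x}\|_1$ for large $\mathbf{x}$, and this is exactly where invertibility of $M$ enters. Writing $\|M^{-1}\|$ for the operator norm of $M^{-1}$ with respect to $\|\cdot\|_1$, the estimate $\|\mathbf{x}\|_1=\|M^{-1}M\mathbf{x}\|_1\le\|M^{-1}\|\,\|M\mathbf{x}\|_1$ together with $\|M\mathbf{x}+\mathbf{v}\|_1\ge\|M\mathbf{x}\|_1-\|\mathbf{v}\|_1$ produces a radius $R>0$ and a constant $\kappa>0$ such that $1+\|M\mathbf{x}+\mathbf{v}\|_1\ge\kappa\left(1+\|\mathbf{x}\|_1\right)$ whenever $\|\mathbf{x}\|_1>R$, and hence $Q_{\mathcal{A}}(\mathbf{x})\ge c\kappa^{2m}\left(1+\|\mathbf{x}\|_1\right)^{2m}$ there. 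On the complementary compact ball $\overline{K_1}(0,R)$ the function $\mathbf{x}\mapsto Q_{\mathcal{A}}(\mathbf{x})/\left(1+\|\mathbf{x}\|_1\right)^{2m}$ is continuous and strictly positive, so it attains a positive minimum. Taking the smaller of the two positive bounds shows that the infimum over all of $\mathbb{R}^d$ is positive, which is (\ref{3.36(RW2)}) for $Q_{\mathcal{A}}$; by Proposition~6 this is equivalent to (\ref{3.37(RW)}), completing the proof.
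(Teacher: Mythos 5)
Your proof is correct, and it differs from the paper's in one genuine respect. Both arguments share the same core mechanism: trade the coefficient condition (\ref{3.37(RW)}) for the equivalent coercivity condition (\ref{3.36(RW2)}) (Proposition 6 together with the inequalities (\ref{3.31})--(\ref{3.33})), obtain $Q(\mathbf{y})\ge c\left(1+\left\Vert \mathbf{y}\right\Vert_{1}\right)^{2m}$, and transport this bound through the map using $\left\Vert \mathbf{x}\right\Vert_{1}\le\left\Vert M^{-1}\right\Vert_{1}\left\Vert M\mathbf{x}\right\Vert_{1}$. The divergence is in how the translation is handled. The paper factors $\mathcal{A}$ into a linear isomorphism followed by a translation; the linear part is treated exactly as you treat it (with a case split on whether $\left\Vert F^{-1}\right\Vert_{1}>1$ in place of your ball-plus-compactness argument, which is possible there because with no translation the estimate holds globally), while the translation is disposed of by a purely algebraic observation: Newton's binomial formula shows that translating the argument leaves the coefficient of each $x_{j}^{2m}$ unchanged, so (\ref{3.37(RW)}) is preserved verbatim. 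You instead absorb the translation vector $\mathbf{v}$ into the analytic estimate, accepting that the lower bound $1+\left\Vert M\mathbf{x}+\mathbf{v}\right\Vert_{1}\ge\kappa\left(1+\left\Vert \mathbf{x}\right\Vert_{1}\right)$ only holds outside a ball and finishing with continuity and positivity on the compact remainder. Your route is more uniform (one estimate covers the whole affine map and no factorization is needed) and, usefully, you make explicit a point the paper leaves tacit: that $Q_{\mathcal{A}}$ still has degree $2m$ (via the leading form $Q_{2m}\circ M$ and invertibility of $M$), which is what licenses applying Proposition 6 to $Q_{\mathcal{A}}$ with the same $m$. The paper's route has the small advantage that the translation step is exact at the level of coefficients rather than requiring a further compactness argument. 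Both are complete proofs.
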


\begin{proof}
Let us remind that every affine isomorphism in $\mathbb{R}^{d}$ has the form%
\[
\mathcal{A(}\mathbf{x)=}F(\mathbf{x)+b,}\text{ \ }\mathbf{x}\in \mathbb{R}^{d},
\]%
where $F$ is a linear isomorphism and $\mathbf{b}\in \mathbb{R}^{d}.$

Positivity of the polynomial $Q_{\mathcal{A}}$ is clear. Let us
assume that
 $\mathbf{b=0}$ and denote by $\left\vert \left\vert F^{-1}\right\vert
\right\vert _{1}$ the operator norm of the map $F^{-1}$ with
respect to $l^{1}-$norm $\left\vert \left\vert \cdot \right\vert
\right\vert _{1}$ in $\mathbb{R}^{d}.$ Then $\left\vert \left\vert
F^{-1}\right\vert \right\vert _{1}>0.$ Since $Q$ satisfies
(\ref{3.37(RW)}) it satisfies (\ref{3.36(RW2)}), too. We will show
that $Q_{\mathcal{A}}$ satisfies (\ref{3.36(RW2)}). Let us
consider $\mathbf{x,y}\in \mathbb{R}^{d}$ such that
$\mathbf{x=}F\mathbf{(y)}.$ We have $\left\vert \left\vert
\mathbf{y}\right\vert \right\vert _{1}=\left\vert \left\vert
F^{-1}(\mathbf{x})\right\vert \right\vert _{1}\leq \left\vert
\left\vert F^{-1}\right\vert \right\vert _{1}\left\vert
\left\vert \mathbf{x}\right\vert \right\vert _{1}$ and then%
\[
\frac{Q_{\mathcal{A}}(\mathbf{y})}{\left( 1+\left\vert \left\vert \mathbf{y}%
\right\vert \right\vert _{1}\right) ^{2m}}
    =\frac{Q(\mathbf{x})}{\left(1+\left\vert \left\vert F^{-1}(\mathbf{x})\right\vert \right\vert
_{1}\right) ^{2m}}
    \geq \frac{Q(\mathbf{x})}{\left( 1+\left\vert \left\vert
F^{-1}\right\vert \right\vert _{1}\left\vert \left\vert \mathbf{x}%
\right\vert \right\vert _{1}\right) ^{2m}}.
\]%
There are two possibilities.

\textbf{Case 1}: $\left\vert \left\vert F^{-1}\right\vert \right\vert
_{1}>1.$ Then%
\[
1+\left\vert \left\vert F^{-1}\right\vert \right\vert _{1}\left\vert
\left\vert \mathbf{x}\right\vert \right\vert _{1}<\left\vert \left\vert
F^{-1}\right\vert \right\vert _{1}+\left\vert \left\vert F^{-1}\right\vert
\right\vert _{1}\left\vert \left\vert \mathbf{x}\right\vert \right\vert
_{1}=\left\vert \left\vert F^{-1}\right\vert \right\vert _{1}\left(
1+\left\vert \left\vert \mathbf{x}\right\vert \right\vert _{1}\right)
\]%
and%
\[
\frac{Q_{\mathcal{A}}(\mathbf{y})}{\left( 1+\left\vert \left\vert \mathbf{y}%
\right\vert \right\vert _{1}\right) ^{2m}}>\frac{Q(\mathbf{x})}{\left\vert
\left\vert F^{-1}\right\vert \right\vert _{1}^{2m}\left( 1+\left\vert
\left\vert \mathbf{x}\right\vert \right\vert _{1}\right) ^{2m}}\geq \frac{c}{%
\left\vert \left\vert F^{-1}\right\vert \right\vert _{1}^{2m}}>0,
\]%
where $c$ is the least lower bound from condition (\ref{3.36(RW2)}) for $Q.$ Hence%
\[
\inf_{\mathbf{y}\in \mathbb{R}^{d}}\frac{Q_{\mathcal{A}}(\mathbf{y})}{\left(
1+\left\vert \left\vert \mathbf{y}\right\vert \right\vert _{1}\right) ^{2m}}%
\geq \frac{c}{\left\vert \left\vert F^{-1}\right\vert \right\vert _{1}^{2m}}%
>0.
\]

\textbf{Case 2}: $\left\vert \left\vert F^{-1}\right\vert \right\vert
_{1}\leq 1.$ Then%
\[
1+\left\vert \left\vert F^{-1}\right\vert \right\vert _{1}\left\vert
\left\vert \mathbf{x}\right\vert \right\vert _{1}\leq 1+\left\vert
\left\vert \mathbf{x}\right\vert \right\vert _{1}
\]%
and%
\[
\frac{Q_{\mathcal{A}}(\mathbf{y})}{\left( 1+\left\vert \left\vert \mathbf{y}%
\right\vert \right\vert _{1}\right) ^{2m}}\geq \frac{Q (\mathbf{%
x})}{\left( 1+\left\vert \left\vert \mathbf{x}\right\vert \right\vert
_{1}\right) ^{2m}}\geq c>0.
\]%
This means that%
\[
\inf_{\mathbf{y}\in \mathbb{R}^{d}}\frac{Q_{\mathcal{A}}(\mathbf{y})}{\left(
1+\left\vert \left\vert \mathbf{y}\right\vert \right\vert _{1}\right) ^{2m}}%
\geq c>0.
\]%
In both cases the polynomial $Q_{\mathcal{A}}$ satisfies condition
(\ref{3.36(RW2)}), so it satisfies (\ref{3.37(RW)}), too.

Let us assume now that $F=id_{\mathbb{R}^{d}}$ which means that $%
\mathcal{A}$ is the translation by vector $\mathbf{b=}\left(
b_{1},...,b_{d}\right) .$ Then from Newton's formula for $%
\left( y_{i}+b_{i}\right) ^{2m}$ we have that for every $1\leq
j\leq d$ the coefficient of $y_{i}^{2m}$ in the polynomial
$Q_{\mathcal{A}}$ is the same as the coefficient of  $x_{i}^{2m}$
in the polynomial $Q$. It means that $Q_{\mathcal{A}}$ satisfies
condition (\ref{3.37(RW)}) iff $Q$ satisfies this condition.
Thesis of the proposition follows from the fact that every affine
isomorphism is a composition of a linear isomorphism and some
translation.
\end{proof}

\begin{conclusion}
In the above proposition we can substitute implication by equivalence: the polynomial
 $Q$ satisfies condition (\ref{3.37(RW)})
iff $Q_{\mathcal{A}}$ satisfies (\ref{3.37(RW)}).
\end{conclusion}

\begin{proof}
If $\mathcal{A}$ is an isomorphism then $\mathcal{A}^{-1}$ is an
isomorphism, too and $Q=\left( Q_{\mathcal{A}}\right) _{\mathcal{A}^{-1}}.$
\end{proof}

The following theorem is the main result of this paper.

\begin{theorem}
Let the function $f$ be the density on
$\mathbb{R}^{d}$of the form%
\[
f(\mathbf{x})=\frac{\sqrt{\det \mathbf{A}}}{\left( 2\pi \right) ^{\frac{d}{2}}}p_{2l}(%
\mathbf{x})\exp \left( -\frac{1}{2}\left( \mathbf{x-b}\right)^T \mathbf{A}\left(
\mathbf{x-b}\right)  \right) ,
\]%
where $p_{2l}$ is a positive polynomial of degree $2l$ satisfying
(\ref{3.37(RW)}), and $A$ is a symmetric and positive matrix of
dimension $d\times d$. Let $\varphi$ be the characteristic
function of this distribution. Then there are $d$-dimensional
random variables $Y$ and $Z$ -- the first with polynomial-normal
distribution and the second with the normal distribution such that

\[
\varphi =\varphi _{Y}\varphi _{Z},
\]%
where $\varphi _{Y}$ and $\varphi _{Z}$ are characteristic
functions of  $Y$ and $Z$ respectively.
\end{theorem}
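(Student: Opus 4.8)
The plan is to extract a small normal factor from $\varphi$ and to control the associated polynomial of what remains by the perturbation results proved above. First I would normalize the data. Since $\mathbf{A}$ is symmetric and positive definite, I choose a linear isomorphism $F$ with $F^{T}\mathbf{A}F=\mathrm{Id}$ and set $\mathcal{A}(\mathbf{x})=F\mathbf{x}+\mathbf{b}$. Passing to $\mathbf{W}=F^{-1}(X-\mathbf{b})$ replaces $f$ by the standard polynomial-normal density $\tfrac{1}{(2\pi)^{d/2}}(p_{2l}\circ\mathcal{A})(\mathbf{w})\exp(-\tfrac12|\mathbf{w}|^{2})$; by Proposition 8 and Conclusion 9 the polynomial $p_{2l}\circ\mathcal{A}$ is again positive and still satisfies (\ref{3.37(RW)}). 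A decomposition obtained in these coordinates transports back through $\mathcal{A}$ to a decomposition of $\varphi$ into an affine image of a polynomial-normal factor and a linear image of a normal factor, which are again of the required types and independent. Hence I may assume $\mathbf{A}=\mathrm{Id}$ and $\mathbf{b}=\mathbf{0}$ and write $\varphi(\mathbf{t})=R(\mathbf{t})\exp(-\tfrac12|\mathbf{t}|^{2})$, where $R$ is a polynomial of degree $2l$.

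Next I would fix a parameter $\sigma^{2}\in(0,1)$, put $\tau=1-\sigma^{2}$, and define the candidate factors $\varphi_{Z}(\mathbf{t})=\exp(-\tfrac{\sigma^{2}}{2}|\mathbf{t}|^{2})$, the characteristic function of the centered normal law with covariance $\sigma^{2}\,\mathrm{Id}$, and $\varphi_{Y}=\varphi/\varphi_{Z}=R(\mathbf{t})\exp(-\tfrac{\tau}{2}|\mathbf{t}|^{2})$. Since $\varphi_{Z}$ never vanishes this is well defined, $\varphi=\varphi_{Y}\varphi_{Z}$, and $\varphi_{Y}(\mathbf{0})=R(\mathbf{0})=1$. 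It therefore remains only to show that $\varphi_{Y}$ is itself a characteristic function, and because its inverse Fourier transform already has polynomial-normal shape this reduces to checking that the associated polynomial is nonnegative. Computing the inverse transform gives $f_{Y}=R(i\nabla)G_{\tau}$ with $G_{\tau}(\mathbf{x})=(2\pi\tau)^{-d/2}\exp(-|\mathbf{x}|^{2}/2\tau)$. Since $\partial_{x_{j}}G_{\tau}=-\tau^{-1}x_{j}G_{\tau}+(\text{lower order})$, carrying out the differentiations exhibits $f_{Y}=\widetilde{c}_{Y}\,\widetilde{p}_{Y}(\mathbf{x})\exp(-|\mathbf{x}|^{2}/2\tau)$, where the coefficients of the degree-$2l$ polynomial $\widetilde{p}_{Y}$ are explicit functions of $\tau$, continuous at $\tau=1$ and reducing there to the coefficients of $p_{2l}$.

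The crucial structural observation is that a monomial $R_{\beta}\mathbf{t}^{\beta}$ contributes to $\widetilde{p}_{Y}$ only in degree $\le|\beta|$, so the degree-$2l$ part of $\widetilde{p}_{Y}$ comes solely from the top part of $R$ and, to leading order, each $i\partial_{x_{j}}$ acts as multiplication by $-i\tau^{-1}x_{j}$. By homogeneity this makes the top-degree homogeneous part of $\widetilde{p}_{Y}$ equal to exactly $\tau^{-2l}$ times the top-degree part of $p_{2l}$, a single positive scalar factor common to all degree-$2l$ coefficients. I would therefore set $W=\tau^{2l}\widetilde{p}_{Y}$: its degree-$2l$ coefficients coincide with those of $p_{2l}$, while its lower-order coefficients equal $\tau^{2l}$ times those of $\widetilde{p}_{Y}$ and hence converge to the lower-order coefficients of $p_{2l}$ as $\tau\to1$. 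Applying Theorem 7 with $Q=p_{2l}$ (positive and satisfying (\ref{3.37(RW)})), I fix $\tau$ close enough to $1$ that every coefficient of $W$ with $|\alpha|<2l$ lies within the corresponding $\varepsilon$; since the degree-$2l$ coefficients already match, Theorem 7 yields $W>0$ on $\mathbb{R}^{d}$, so $\widetilde{p}_{Y}=\tau^{-2l}W>0$. Thus $f_{Y}$ is a genuine polynomial-normal density with $\int f_{Y}=\varphi_{Y}(\mathbf{0})=1$, $\varphi_{Y}=\varphi_{Y'}$ is the characteristic function of a polynomial-normal variable $Y$ and $\varphi_{Z}$ that of a normal variable $Z$; transporting this back through $\mathcal{A}$ completes the proof.

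The main obstacle is the middle step: verifying rigorously that the deconvolved polynomial $\widetilde{p}_{Y}$ has its whole top-degree part rescaled by the single positive factor $\tau^{-2l}$ — which is precisely what anchors the leading coefficients and makes Theorem 7 applicable with only lower-order control — and that the remaining coefficients depend continuously on $\tau$ with limit the coefficients of $p_{2l}$ at $\tau=1$. Everything else, namely the affine normalization, the identity $\varphi_{Y}(\mathbf{0})=1$, and the transport of the decomposition back to the original coordinates, is routine once (\ref{3.37(RW)}) is known to be preserved, which is the content of Proposition 8.
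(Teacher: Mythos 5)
Your proposal is correct and follows essentially the same route as the paper's own proof: after the same affine normalization via Proposition 8, the paper also extracts a normal factor $\exp\left(-\tfrac{1}{2}(1-\theta^{2})|\mathbf{t}|^{2}\right)$ and shows the remaining factor $\varphi_{\theta}$ (your $\varphi_{Y}$ with $\tau=\theta^{2}$) is a polynomial-normal characteristic function by letting $\theta\to 1^{-}$, noting that the coefficients of the deconvolved polynomial converge to those of $p_{2l}$, and invoking Theorem 7. Your extra rescaling by $\tau^{2l}$ to pin the top-degree coefficients exactly is a harmless refinement the paper does not need, since it controls all coefficients $|\alpha|\le 2l$ at once.
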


\begin{proof}
Let $\widetilde{X}$ be a $d$-dimensional random variable with density%
 $f$. Then the random variable $X:=\widetilde{X}-\mathbf{b}$ has the density of the form%
\begin{equation*}
f_{X}(\mathbf{x})=\frac{\sqrt{\det \mathbf{A}}}{\left( 2\pi \right) ^{\frac{d}{2}}}%
p_{2l}(\mathbf{x+b})\exp \left( -\frac{1}{2}\mathbf{x}^T \mathbf{A}\mathbf{x}\right).
\label{3.39} 
\end{equation*}
The polynomial $p_{2l}$
is positive (from Theorem 8) and satisfies condition (\ref{3.37(RW)}). We have

\[
\varphi (\mathbf{t})=\varphi _{X}(\mathbf{t})\exp \left(
i\mathbf{b}\cdot\mathbf{t} \right) ,\text{ \ }\mathbf{t}\in
\mathbb{R}^{d}.
\]%
If $\varphi _{X}=\varphi _{Y}\varphi _{Z},$ where $Y$ and $Z$ %
are $d$-dimensional random variables which satisfies condition of the thesis.
Then for every $\mathbf{t}\in \mathbb{R}^{d}$%
\[
\varphi (\mathbf{t})=\exp \left( i\mathbf{b}\cdot\mathbf{t}
\right) \varphi _{Y}(\mathbf{t})\varphi _{Z}(\mathbf{t})=\varphi _{Y}(%
\mathbf{t})\left[ \exp \left( i\mathbf{b}\cdot\mathbf{t}
\right) \varphi _{Z}(\mathbf{t})\right] =\varphi _{Y}(\mathbf{t})\varphi _{%
\widetilde{Z}}(\mathbf{t}),
\]%
where $\widetilde{Z}=Z+\mathbf{b}$ is $d$-dimensional random
variable of normal distribution. Hence we may assume that the
density $f$ from the thesis of our theorem has the form
\[
f(\mathbf{x})=\frac{\sqrt{\det \mathbf{A}}}{\left( 2\pi \right) ^{\frac{d}{2}}}p_{2l}(%
\mathbf{x})\exp \left( -\frac{1}{2}\mathbf{x}^{T}\mathbf{Ax}\right) .
\]

Let L be the matrix which reduce $\mathbf{A}$ to a normalized
diagonal matrix (to the identity matrix).

Let $\mathbf{X}=L\mathbf{U.}$ Then $\mathbf{U}=L^{-1}\mathbf{X}$ \
and the density function of $\mathbf{U}$ is given by

\[
f_{\mathbf{U}}\left( \mathbf{u}\right) =f_{L^{-1}\mathbf{X}}\left( \mathbf{u}%
\right) =f_{\mathbf{X}}\left( L\mathbf{u}\right) \left\vert \det
L\right\vert =
\]%
\[
=\frac{\sqrt{\det \mathbf{A}\left( \det L\right) ^{2}}}{\left( 2\pi \right) ^{\frac{d%
}{2}}}p_{2l}(L\mathbf{u})\exp \left( -\frac{1}{2}\left( L\mathbf{u}\right)
^{T}\mathbf{A}\left( L\mathbf{u}\right) \right) =
\]%
\[
=\frac{\sqrt{\det \left( L^{T}\mathbf{A}L\right) }}{\left( 2\pi \right) ^{%
\frac{d}{2}}}\widetilde{p_{2l}}(\mathbf{u})\exp \left( -\frac{1}{2}\mathbf{u}%
^{T}(L^{T}\mathbf{A}L)\mathbf{u}\right) =
\]%
\[
=\frac{1}{\left( 2\pi \right) ^{\frac{d}{2}}}\widetilde{p_{2l}}(\mathbf{u}%
)\exp \left( -\frac{1}{2}\mathbf{u}^{T}\mathbf{u}\right).
\]

From Proposition 8 the polynomial $\widetilde{p_{2l}}(\mathbf{u})=p_{2l}(L%
\mathbf{u})$ is positive and satisfies condition (\ref{3.37(RW)}). If $%
\varphi _{U}=\varphi _{\widetilde{\mathbf{Y}}}\varphi _{\widetilde{\mathbf{Z%
}}}$, where $\widetilde{\mathbf{Y}}$ has a polynomial-normal
distribution and
 $\widetilde{\mathbf{Z}}$ has a normal distribution then
\[
\varphi _{\mathbf{X}}\left( \mathbf{t}\right) =\varphi _{L\mathbf{U}}\left(
\mathbf{t}\right) =\varphi _{\mathbf{U}}\left( L^{\ast }\mathbf{t}\right)
=\varphi _{\widetilde{\mathbf{Y}}}\left( L^{\ast }\mathbf{t}\right) \varphi
_{\widetilde{\mathbf{Z}}}\left( L^{\ast }\mathbf{t}\right) =\varphi _{L%
\widetilde{\mathbf{Y}}}\left( \mathbf{t}\right) \varphi _{L\widetilde{%
\mathbf{Z}}}\left( \mathbf{t}\right) ,\text{ \ \ \ }\mathbf{t}\in \mathbb{R}%
^{d}.
\]%
We know that $\mathbf{Y}:=L\widetilde{\mathbf{Y}}$ has a
polynomial-normal distribution and
$\mathbf{Z}:=L\widetilde{\mathbf{Z}}$ has a normal distribution.
Hence, if the thesis of the theorem holds for the identity matrix
$I$ then it holds for any symmetric and positive matrix $%
\mathbf{A}$.

We must only consider the case when the density from the thesis is
given by
\[
f(\mathbf{x})=\frac{1}{\left( 2\pi \right) ^{\frac{d}{2}}}p_{2l}(\mathbf{x}%
)\exp \left( -\frac{1}{2}\sum\limits_{i=1}^{d}x_{i}^{2}\right) .
\]%
Let $\varphi$ be the characteristic function of $f$. Then it can
be written in the form
\[
\varphi \left( \mathbf{t}\right) =\sum\limits_{\left\vert \alpha
\right\vert \leq 2l}\beta _{\alpha }\left( i\mathbf{t}\right) ^{\alpha }\exp
\left( -\frac{1}{2}\sum\limits_{i=1}^{d}t_{i}^{2}\right) ,\text{ \ \ \ }%
\mathbf{t}\in \mathbb{R}^{d},
\]%
  where $\beta _{\alpha }\in\mathbb{R}$ (see \cite{lukacz3}, \S 7.3). Let
\begin{equation*}
\varphi _{\theta }\left( \mathbf{t}\right)
=\sum\limits_{\left\vert \alpha \right\vert \leq 2l}\beta _{\alpha
}\left( i\mathbf{t}\right) ^{\alpha }\exp \left(
-\frac{1}{2}\sum\limits_{i=1}^{d}\theta ^{2}t_{i}^{2}\right)
,\text{ \ \ \ }\mathbf{t}\in \mathbb{R}^{d},  \label{Lew}
\end{equation*}
where $\theta \in (0,1).$ Then the inverse Fourier transform of
$\varphi _{\theta }$ is equal to
\[
f_{\theta }(\mathbf{x})=\frac{1}{\left( 2\pi \right) ^{\frac{d}{2}}}\left[
\sum\limits_{\left\vert \alpha \right\vert \leq 2l}\beta _{\alpha }\frac{1}{%
\theta ^{\left\vert \alpha \right\vert }}\prod\limits_{j=1}^{d}H_{\alpha
_{j}}\left( \frac{x_{j}}{\theta }\right) \right] \exp \left( -\frac{1}{%
2\theta ^{2}}\sum\limits_{j=1}^{d}x_{j}^{2}\right) ,
\]%
where ${\alpha_{j}}$ denotes the Hermite polynomial of order
$\alpha_{j}$ (see \cite{lukacz3}, \S 7.3).
It is clear that%
\[
\lim_{\theta \rightarrow 1^{-}}f_{\theta }(\mathbf{x})=\frac{1}{\left( 2\pi
\right) ^{\frac{d}{2}}}\left[ \sum\limits_{\left\vert \alpha \right\vert
\leq 2l}\beta _{\alpha }\prod\limits_{j=1}^{d}H_{\alpha _{j}}\left(
x_{j}\right) \right] \exp \left( -\frac{1}{2}\sum\limits_{j=1}^{d}x_{j}^{2}%
\right) =f(\mathbf{x}).
\]%
Moreover the coefficients of the polynomial
\[
p_{2l,\theta }(\mathbf{x})=\sum\limits_{\left\vert \alpha \right\vert \leq
2l}\beta _{\alpha }\frac{1}{\theta ^{\left\vert \alpha \right\vert }}%
\prod\limits_{j=1}^{d}H_{\alpha _{j}}\left( \frac{x_{j}}{\theta }\right) ,%
\text{ \ \ \ }\mathbf{x}\in \mathbb{R}^{d},
\]%
converge to the coefficients of the polynomial%
\[
\sum\limits_{\left\vert \alpha \right\vert \leq 2l}\beta _{\alpha
}\prod\limits_{j=1}^{d}H_{\alpha _{j}}\left( x_{j}\right) =p_{2l}(\mathbf{x}%
),\text{ \ \ \ \ }\mathbf{x}\in \mathbb{R}^{d}.
\]%

Thus, if we write $p_{2l}$ in the form%
\[
p_{2l}(\mathbf{x})=\sum\limits_{\left\vert \alpha \right\vert \leq
2l}a_{\alpha }\mathbf{x}^{\alpha }
\]%
and%
\[
p_{2l,\theta }(\mathbf{x})=\sum\limits_{\left\vert \alpha \right\vert \leq
2l}b_{\alpha }(\theta )\mathbf{x}^{\alpha },
\]%
then for every $\alpha \in   \mathbb{Z}_{+}^{d}$ ,
 $\left\vert \alpha \right\vert \leq 2l$ we have%
\[
\lim_{\theta \rightarrow 1^{-}}b_{\alpha }(\theta )=a_{\alpha }.
\]%
Let $\varepsilon >0$ be a number from Theorem 7 chosen for the
polynomial $Q=p_{2l}.$ Then
there exists $\delta >0$ such that if $1-\theta <\delta ,$ then $%
\left\vert b_{\alpha }(\theta )-a_{\alpha }\right\vert
<\varepsilon $ for
 $\left\vert \alpha \right\vert \leq 2l.$ If we take  $\theta
\in \left( 1-\delta ,1\right) $  then we get nonnegative polynomial $%
p_{2l,\theta }$ on $\mathbb{R}^{d}.$ Hence $%
f_{\theta }$ is a density function on $\mathbb{R}%
^{d}$ and the random variable $\mathbf{Y}$ has $PND_{d}$ distribution.

Let%
\[
f_{\mathbf{Z}}(\mathbf{x})=\frac{1}{\left( 2\pi \right) ^{\frac{d}{2}}\left(
1-\theta ^{2}\right) ^{\frac{d}{2}}}\exp \left( -\frac{1}{2\left( 1-\theta
^{2}\right) }\sum\limits_{i=1}^{d}x_{i}^{2}\right).
\]%
Then
\[
\varphi _{\mathbf{Z}}\left( \mathbf{t}\right) =\exp \left( -\frac{1}{2}%
\left( 1-\theta ^{2}\right) \sum\limits_{i=1}^{d}t_{i}^{2}\right) .
\]%
and%
\[
\varphi _{\mathbf{Y}}\left( \mathbf{t}\right) \varphi _{\mathbf{Z}}\left(
\mathbf{t}\right) =\varphi _{\mathbf{\theta }}\left( \mathbf{t}\right)
\varphi _{\mathbf{Z}}\left( \mathbf{t}\right) =\varphi \left( \mathbf{t}%
\right) ,\text{ \ \ }\mathbf{t}\in \mathbb{R}^{d}.
\]%
This ends the proof of our theorem.
 \end{proof}


\textit{Acknowledgments.} We express our thanks to J. Weso\l owski
and Z. Jelonek for the inspiration and useful discussions during
writing this paper.


\begin{thebibliography}{99}


\bibitem{lukacz}  Lukacs, E., \textit{Characteristic functions}, Griffin,
London  1970.

\bibitem{lukacz3} Lukacs, E., \textit{Developments in characteristic function theory},
Oxford University Press  1983.

\bibitem{PWM} Maj, M., Pasternak-Winiarski, Z., \textit{Composition and decomposition
of multidimensional polynomial-normal distribution}, J. Math. Sci.
Univ. Tokyo, No.14 (2007), 511-530.




\bibitem{Mau} Maurin, K., \textit{Analysis, Part II}, PWN - Polish Scientific Publishers,
Warszawa; D. Reidel Publishing Company, Dordrecht, Boston London
1980.

\bibitem{pluc4}  Pluci\'{n}ska, A., \textit{Composition and decomposition of
polynomial - normal distributions}. Math. Society, Proc. of \
''Fourth Hungarian Colloquium on Limit Theorems of Probability and
Statistics'' 2001.


\end{thebibliography}
\end{document}